\numberwithin{equation}{section}
\numberwithin{figure}{section}
\theoremstyle{plain}
\newtheorem{theorem}{\protect\theoremname}
\newtheorem{theorem}{\protect\theoremname}[chapter]
  \theoremstyle{definition}
  \newtheorem{defn}[theorem]{\protect\definitionname}
  \theoremstyle{plain}
  \newtheorem*{lem*}{\protect\lemmaname}
  \theoremstyle{plain}
  \newtheorem{lem}[theorem]{\protect\lemmaname}
  \theoremstyle{plain}
  \theoremstyle{plain}
  \newtheorem*{prop*}{\protect\propositionname}
  \theoremstyle{remark}
  \newtheorem{rem}[theorem]{\protect\remarkname}
  \theoremstyle{plain}
  \newtheorem{prop}[theorem]{\protect\propositionname}
  \theoremstyle{remark}
  \theoremstyle{definition}
  \newtheorem{ex}[theorem]{\protect\examplename}
\def\subsection{\@startsection{subsection}{3}%
  \z@{.5\linespacing\@plus.7\linespacing}{.1\linespacing}%
  {\normalfont\bfseries}}
\begin{document}

\providecommand{\claimname}{Claim}
\providecommand{\corollaryname}{Corollary}
\providecommand{\definitionname}{Definition}
\providecommand{\lemmaname}{Lemma}
\providecommand{\propositionname}{Proposition}
\providecommand{\remarkname}{Remark}
\providecommand{\examplename}{Example}
\providecommand{\theoremname}{Theorem}

\global\long\def\C{\mathbb{C}}
\global\long\def\N{\mathbb{N}}
\global\long\def\R{\mathbb{R}}
\global\long\def\Z{\mathbb{Z}}
\global\long\def\Q{\mathbb{Q}}
\global\long\def\A{\mathbb{A}}
\global\long\def\F{\mathbb{F}}
\global\long\def\P{\mathbb{P}}
\global\long\def\H{\mathbb{H}}
\global\long\def\ms#1{\mathscr{#1}}
\global\long\def\mb#1{\mathbb{#1}}
\global\long\def\mf#1{\mathfrak{#1}}
\global\long\def\mc#1{\mathcal{#1}}
\global\long\def\mr#1{\mathrm{#1}}
\global\long\def\dau{\partial}
\global\long\def\bdau{\overline{\dau}}
\global\long\def\sd{\mbox{d}}
\global\long\def\im{\mathrm{im}}
\global\long\def\cok{\mathrm{coker}}
\global\long\def\End{\mathrm{End}}
\global\long\def\so{\mathfrak{so}}
\global\long\def\Ad{\mathrm{Ad}}
\global\long\def\ad{\mathrm{ad}}
\global\long\def\stab{\mathrm{stab}}
\global\long\def\<{\left<}
\global\long\def\>{\right>}
\global\long\def\su{\mathfrak{su}}
\global\long\def\so{\mathfrak{so}}
\global\long\def\sp{\mathfrak{sp}}
\global\long\def\g{\mathfrak{g}}
\global\long\def\h{\mathfrak{h}}
\global\long\def\m{\mathfrak{m}}
\global\long\def\n{\mathfrak{n}}
\global\long\def\k{\mathfrak{k}}
\global\long\def\a{\mathfrak{a}}
\global\long\def\b{\mathfrak{b}}
\global\long\def\s{\mathfrak{s}}
\global\long\def\z{\mathfrak{z}}
\global\long\def\p{\mathfrak{p}}
\global\long\def\Span{\mbox{Span}}
\global\long\def\Der{\mbox{Der}}

\title{A new construction of naturally reductive spaces}
\author{Reinier Storm}

\maketitle

\begin{abstract}
A new construction of naturally reductive spaces is presented. This construction gives a large amount of new families of naturally reductive spaces. First the infinitesimal models of the new naturally reductive spaces are constructed. A concrete transitive group of isometries is given for the new spaces and also the naturally reductive structure with respect to this group is explicitly given.
\end{abstract}

\tableofcontents

\section{Introduction}
Naturally reductive spaces form the simplest class of Riemannian homogeneous spaces. They contain the symmetric spaces but also many other non-symmetric homogeneous spaces. The naturally reductive structure on a homogeneous Riemannian manifold is a metric connection $\nabla$ with totally skew-symmetric torsion $T$ and such that $\nabla T = \nabla R = 0$, where $R$ is the curvature tensor of $\nabla$. In particular naturally reductive spaces are simple examples of spaces with metric connection with totally skew-symmetric. In recent years there is an increasing interest in such structures together with parallel spinors (e.g. \cite{FriedrichIvanov2002}  and references therein).

The simple geometric and algebraic properties of naturally reductive spaces allows one to classify them in small dimensions. This has been done in \cite{TricerriVanhecke1983,KowalskiVanhecke1983,KowalskiVanhecke1985}   in dimension $3,~4,~5$ and more recently in dimension $6$ in \cite{AgricolaFerreiraFriedrich2015}. In \cite{AgricolaFerreiraFriedrich2015} there are also some interesting $G$-structures described on the naturally reductive spaces, where the naturally reductive connection is also a characteristic connection for the $G$-structure. In a forthcoming paper we investigate how the construction presented here relates to the classification problem of naturally reductive spaces.

Our construction produces many new examples of naturally reductive spaces. All of these spaces have a non-semisimple transvection algebra and are not normal homogeneous with its canonical naturally reductive structure. To our knowledge the only such construction is by Gordan in \cite{Gordon1985}. Here she constructs naturally reductive structures on 2-step nilpotent Lie groups. The construction presented here can be seen as a generalisation of the above mentioned construction of the 2-step nilpotent Lie groups. Our construction starts with three pieces of data. We take a naturally reductive space $M$ together with a Lie algebra $\k$ with an $\ad(\k)$-invariant metric on $\k$. The algebra $\k$ is a certain subalgebra of derivations of the transvection algebra of $M$. From this data we can construct a new naturally reductive space which is a homogeneous fiber bundle over $M$. If the naturally reductive space we start with is the symmetric space $\R^n$ and $\k\subset \so(n)$ is a subalgebra together with any $\ad(\k)$-invariant metric then we obtain exactly the 2-step nilpotent Lie groups with a naturally reductive structure from \cite{Gordon1985}. However we can start with any base space and a suitable subalgebra $\k$ and obtain many new examples of naturally reductive spaces which are not normally homogeneous with their canonical connection.
\\

In Section \ref{sec:preliminaries} we briefly discuss infinitesimal models of homogeneous spaces and the Nomizu construction (cf. \cite{Tricerri1992}).

In Section \ref{sec:the construction} we define the construction. We do this by defining a new pair of tensors $(T,R)$ form an original infinitesimal model $(T_0,R_0)$ on $(\m,g_0)$ of a naturally reductive space and a certain Lie subalgebra $\k$ of the transvection algebra of the model $(T_0,R_0)$ together with an $\ad(\k)$-invariant metric $B$ on $\k$. We prove that the new pair $(T,R)$ defines an infinitesimal model of a naturally reductive space on $(\k\oplus \m,g=B+g_0)$.

In Section \ref{sec:further investigation} we apply the Nomizu construction to our new infinitesimal models. For this we will not use the transvection algebra of the new model, but a Lie algebra which is known as the double extension, see \cite{MedinaRevoy1985}, of the transvection algebra of the base space by the Lie algebra $\k$. A condition is given when the new infinitesimal models $(T,R)$ are regular. For every constructed space an explicit transitive group of isometries is given and the naturally reductive structure, i.e. the left invariant objects $(g,T,R)$ are described on the Lie algebra of this explicit group of isometries. We will illustrate some of the new types of naturally reductive spaces with explicit examples.

\addtocontents{toc}{\protect\setcounter{tocdepth}{-1}}

\section*{Acknowledgement}
This paper is part of my PhD thesis supervised by Professor Ilka Agricola. I would like to thank her for introducing me to the subject of naturally reductive spaces and for her ongoing support. Also I would like to thank Yurii Nikonorov for reading a preliminary version and for interesting comments on it.

\addtocontents{toc}{\protect\setcounter{tocdepth}{1}}

\section{Preliminaries \label{sec:preliminaries}}
Let $(M=G/H,g)$ be a Riemannian homogeneous manifold. Let $\g$ and $\h$ be the Lie algebras of $G$ and $H$, respectively. Let 
\[
 \g=\mathfrak{h}\oplus \mathfrak{m}
\]
be some reductive decomposition. The reductive decomposition induces a left invariant connection on the principle $H$-bundle $G\to G/H$  called the canonical connection of the complement $\m$. Its horizontal distribution is defined by 
\[
T_g G \supset \mc{H}_{g} = dL_g(\m),
\]
where $L_g:G\to G$ is the left multiplication by $g\in G$. The tangent bundle of $M$ is the associated bundle $TM\cong G\times_{\Ad(H)} \m$. For $X\in \g$ let $\overline{X}$ denote the corresponding fundamental vector field:
\[
\overline{X}(p) := \left.\frac{d}{dt}\right|_{t=0} e^{tX} \cdot p \in T_pM.
\]
We will denote the chosen origin of our homogeneous space by $o$. Remember that $\m$ is canonically identified with the tangent space at the origin by
\begin{equation}
X \mapsto \overline{X}(o) \in T_oM. \label{eq:m = T_oM}
\end{equation}
The associated covariant derivative on $TM$, denoted $\nabla$, of the canonical connection has parallel torsion and curvature: $\nabla T=\nabla R=0$. Having a connection with parallel torsion and curvature characterizes locally homogeneous spaces:

\begin{theorem}[\cite{AmbroseSinger1958}]
A Riemannian manifold is locally homogeneous if and only if there exists a metric connection $\nabla$ with torsion $T$ and curvature $R$ such that 
\begin{equation}
\nabla T = \nabla R = 0. \label{eq:AS connection}
\end{equation}
\end{theorem}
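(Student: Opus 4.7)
The plan is to handle the two directions separately. The forward direction is essentially a compilation of the remarks just preceding the theorem statement: for $M=G/H$, any reductive decomposition $\g=\h\oplus\m$ determines the canonical connection, whose torsion and curvature correspond under \eqref{eq:m = T_oM} to $\Ad(H)$-invariant tensors on $\m$. Since $G$ acts on $M$ by affine transformations of this connection, the associated tensor fields are $\nabla$-parallel, giving \eqref{eq:AS connection}. The construction is local in $G$, so only local homogeneity is needed.

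For the converse I would follow the classical Nomizu-style reconstruction, building an ``infinitesimal transvection algebra'' at a chosen base point $o\in M$ out of the parallel data $(T,R)$. Concretely, let $\m=T_oM$ and let $\h\subset\so(\m)$ be the holonomy algebra of $\nabla$ at $o$; since $\nabla R=0$, this coincides with the Lie subalgebra generated by $\{R_o(X,Y):X,Y\in\m\}$, and its elements are skew-symmetric because $\nabla g=0$. On $\g:=\h\oplus\m$ I would define
\begin{align*}
[A,B] &= AB-BA, & A,B &\in\h,\\
[A,X] &= A(X), & A &\in\h,~X\in\m,\\
[X,Y] &= -T_o(X,Y)+R_o(X,Y), & X,Y &\in\m,
\end{align*}
with the right-hand side decomposed according to $\g=\h\oplus\m$.

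The bulk of the algebraic work is the Jacobi identity. The purely $\h$-part is automatic, the mixed $(\h,\h,\m)$-component reduces to $\h$ being a Lie subalgebra acting on $\m$, and the $(\h,\m,\m)$-component amounts to each $A\in\h$ differentiating $T_o$ and $R_o$ as a derivation, which is precisely the infinitesimal form of $\nabla T=\nabla R=0$ combined with the fact that $\h$ is generated by (parallel transports of) curvature endomorphisms. Finally, the $(\m,\m,\m)$-component is exactly the first and second Bianchi identities for a metric connection with totally skew torsion and parallel $T,R$.

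The main obstacle, and the step requiring the most care, is passing from this abstract Lie algebra $\g$ back to an actual local transitive isometric action on $M$. My plan is to integrate $\g$ to a simply connected Lie group $G$ containing a connected subgroup $H$ with $\mathrm{Lie}(H)=\h$, and then to construct a local action on a normal neighbourhood $U$ of $o$ by transporting along broken $\nabla$-geodesics: an element of $\m$ exponentiates to a $\nabla$-geodesic from $o$, and elements of $\h$ act via parallel transport along such geodesics. Because $\nabla g=0$ and $\h\subset\so(\m)$, this produces local diffeomorphisms preserving $g$; and by construction $\m$ maps onto $T_oM$, so the resulting local action is transitive near $o$. This exhibits $M$ as locally homogeneous and closes the argument.
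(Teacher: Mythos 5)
The paper does not prove this theorem: it is cited from \cite{AmbroseSinger1958} and used as a black box, with the surrounding infinitesimal-model formalism and the Nomizu construction (following \cite{Tricerri1992}) presented as background rather than as a proof. So there is no in-paper argument to compare against.

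That said, your sketch is the correct classical outline and is consistent with the background the paper surveys. Your forward direction is standard, and your Nomizu bracket on $\g=\h\oplus\m$, with $\h$ taken to be the holonomy algebra (equivalently, the Lie algebra generated by $\{R_o(X,Y)\}$, since $\nabla R=0$), is a legitimate choice; the paper itself notes, in the remark following the Nomizu construction, that any subalgebra lying between $\im(R)$ and the full stabilizer $\{h\in\so(\m):h\cdot T=h\cdot R=0\}$ may be used. Two points deserve attention. First, the phrase ``totally skew torsion'' in your discussion of the $(\m,\m,\m)$-Jacobi component is a red herring: the Ambrose--Singer theorem imposes no skew-symmetry on $T$, and the algebraic Bianchi identities \eqref{eq:B1} and \eqref{eq:B2} hold for any affine connection with $\nabla T=\nabla R=0$, so the Jacobi check goes through without that hypothesis. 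Second --- and this is the genuine gap --- the final paragraph, passing from the abstract Lie algebra $\g$ to an actual transitive family of local isometries near $o$, is left as a plan rather than an argument. This is precisely where the proof (Ambrose--Singer in the complete case, Tricerri's refinement in the local case) does its real work: one must show that the ``development'' of $\g$ into $M$ along $\nabla$-geodesics produces local affine transformations, that they are isometries because $\nabla g=0$ and $\h\subset\so(\m)$, and that the resulting local pseudogroup acts transitively on a neighbourhood of $o$. Nothing you write is wrong, but as stated the converse is asserted, not proved.
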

A metric connection satisfying \eqref{eq:AS connection} is called an \emph{Ambrose-Singer connection}. If $M$ is in addition to the above theorem complete, then its universal cover is globally homogeneous. The torsion $T$ and curvature $R$ of an Ambrose-Singer connection are completely determined by their values at $T_oM\cong \m$. The pair of tensors $(T,R)$:
\begin{equation}
T: \Lambda^2 \m \to \m,\quad  R: \Lambda^2 \m \to \so(\m),\label{eq:(T,R)}
\end{equation}
satisfies
\begin{align}
&R(X,Y)\cdot T = R(X,Y)\cdot R=0 \label{eq:par T and R} \\
&\mf{S}^{X,Y,Z} R(X,Y)Z - \mf{S} T(T(X,Y),Z) = 0 \tag{B.1} \label{eq:B1}\\ 
&\mf{S}^{X,Y,Z} R(T(X,Y),Z) =0 \tag{B.2} \label{eq:B2},
\end{align}
where $\mf{S}^{X,Y,Z}$ denotes the cyclic sum over $X,Y$ and $Z$ and $\cdot$ denotes the natural action of $\so(\m)$ on tensors. The first equation encodes that $T$ and $R$ are parallel objects for $\nabla$ and under this condition the first and second Bianchi identity become equations \eqref{eq:B1} and \eqref{eq:B2}, respectively. A pair of tensors $(T,R)$ as in \eqref{eq:(T,R)} on a vector space $\m$ with a metric $g$ satisfying \eqref{eq:par T and R}, \eqref{eq:B1} and \eqref{eq:B2} is called an infinitesimal model on $(\m,g)$. From the infinitesimal model $(T,R)$ of a homogeneous space one can construct a homogeneous space with infinitesimal model $(T,R)$. This construction is known as the Nomizu construction which is discussed briefly below.

Let 
\[
\h := \{h\in \so(\m) : h\cdot T= h\cdot R=0\}.
\]
The Nomizu construction associates to every infinitesimal model a Lie algebra 
\begin{equation}\label{eq:Nomizu Lie algebra}
\g = \h \oplus \m,
\end{equation}
by defining the following Lie bracket: $A,B \in \h$, $X,Y \in \m$ 
\begin{equation}
[A + X,B + Y ] := [A,B]_{\so(\m)} - R(X, Y) + A(Y) - B(X) - T(X,Y).\label{eq:Nomuzi Lie bracket}
\end{equation}
This bracket satisfies the Jabobi identity if and only if $R$ and $T$ satisfy the equations \eqref{eq:par T and R}, \eqref{eq:B1} and \eqref{eq:B2}. Now we take the simply connected Lie group $G$ with Lie algebra $\g$ and let $H$ be the connected subgroup with Lie algebra $\h$. The infinitesimal model is called regular if $H$ is a closed subgroup of $G$. If this is the case then clearly the canonical connection on $G/H$ has the infinitesimal model $(T,R)$ we started with.
In \cite[Thm. 5.2]{Tricerri1992} it is proved that every infinitesimal model coming from a globally homogeneous Riemannian manifold is regular.

\begin{rem}\label{rem:regular inf model for some im(R) < k < h}
Let $(T,R)$ be an infinitesimal model on $(\m,g)$ with Lie algebra $\g=\h\oplus \m$ as in \eqref{eq:Nomizu Lie algebra}. Note that equation \eqref{eq:par T and R} implies that $\im(R)\subset \h$. Let $\mf p\subset \h$ be any subalgebra with $\im(R)\subset \mf p $. Then $\g':=\mf p\oplus \m\subset \g$ is also a subalgebra. Let $G'$ be a Lie group with Lie algebra $\g'$ and let $P\subset G'$ be the connected subgroup with Lie subalgebra $\mf p\subset \g'$. If $P\subset G'$ is closed then the canonical connection on $G'/P$ from the decomposition $\g':=\mf p\oplus \m$ has the infinitesimal model $(T,R)$. Hence by \cite[Thm. 5.2]{Tricerri1992} the model $(T,R)$ is regular. Conversely if $(T,R)$ is regular then $H\subset G$ is closed, where $G$ is the simply connected Lie group with Lie algebra $\g$ and $H$ is the connected subgroup with Lie subalgebra $\h$. Let $G'\subset G$ be a connected subgroup with Lie subalgebra $\g'$. Then $G'$ acts transitively on $G/H$. Hence the isotropy group $P$ is closed. Note that $P$ is a subgroup of $G'$ with Lie subalgebra $\mf p \subset \g'$. 
\end{rem}

We will call a Riemannian manifold $(M,g)$ \emph{naturally reductive} if there exists a transitive group $G$ of isometries with isotropy group $H$ and a reductive decomposition $\g = \h\oplus \m$ such that the canonical connection has totally skew symmetric torsion. Since the torsion of the canonical connection is given by
\begin{equation}
T(X,Y)_o=-[X,Y]_\mathfrak{m},
\end{equation}
the naturally reductive condition on the Lie algebra $\g$ is explicitly given by:
\[
g([X,Y]_\mathfrak{m},Z) = - g(Y,[X,Z]_\mathfrak{m}),\quad \forall X,Y,Z\in \mathfrak{m},
\]
where the metric on $\m$, which we also denote by $g$, comes from the linear isomorphism \eqref{eq:m = T_oM}.
From now on every homogeneous space will be naturally reductive. We use the metric to make the identification $\Lambda^2 \m\cong \so(\m)$. For naturally reductive spaces the curvature tensor $R:\Lambda^2\m \to \Lambda^2 \m $ is a symmetric map with respect to the Killing form of $\so(\m)$ and equation \eqref{eq:B2} holds automatically, see \cite{AgricolaFerreiraFriedrich2015}. Throughout this paper we will identify $\m$ with its dual $\m^*$ using the metric $g$. In this way we see $T$ as an element in $\Lambda^3\m$ and $R$ as an element in $\Lambda^2\m\odot \Lambda^2 \m$, where $\odot$ denotes the symmetric tensor product.

\begin{rem}
The first Bianchi identity is equivalent to (cf.\cite{AgricolaFerreiraFriedrich2015})
\[
R^{\Lambda^4}=2\sigma_T:= \sum_{i=1}^m (e_i\lrcorner T)\wedge (e_i\lrcorner T),
\]
where $R^{\Lambda^4}$ denotes the $4$-form component of the curvature tensor $R$. In other words $R^{\Lambda^4}=b(R)$, where $b$ is the Bianchi map:
\[
b(R)(X,Y,Z,V) = \frac{1}{3}(R(X,Y,Z,V)+R(Y,Z,X,V)+R(Z,X,Y,V)).
\]
\end{rem}

\section{The construction \label{sec:the construction}}
In this section we let $(M=G/H,g_0)$ be a naturally reductive manifold with respect to the canonical connection, $\nabla$, of a reductive decomposition $\g=\h\oplus \m$. Note that this implies that $\im(R_0)\subset \ad(\h)$, where $R_0$ is the curvature tensor of $\nabla$. Let $T_0\in \Lambda^3 \m$ be the torsion of $\nabla$. We define the following Lie algebra 
\begin{equation}\label{eq:s(g)}
\mf s(\g) :=\{f\in \Der(\g) : f(\h)=\{0\},~ f(\m)\subset \m,~ f|_{\m} \in \so(\m)\}.
\end{equation}
We will usually simply write $\mf s$ instead of $\mf s(\g)$.

\begin{rem}\label{rem:minimal g for s}
Let $\g=\h\oplus \m$ be as in equation \eqref{eq:Nomizu Lie algebra} for an infinitesimal model of a naturally reductive space. Let $\g'=\mf p\oplus \m$ with $\im(R)\subset \mf p\subset \h$ then $\s(\g)\subset \s(\g')$. In particular $\s(\g')$ is largest when $\mf p= \im(R)$. Note that $\im(R)$ is a subalgebra of $\h$ by equation \eqref{eq:par T and R}. For this reason we will often pick the reductive decomposition $\mf p\oplus \m$ with $\mf p =\im(R)$.
\end{rem}

Let $\mf k\subset \mf s$ be a subalgebra and let $\varphi:\mf k \to \so(\m)$ the natural faithful Lie algebra representation. Because of this faithful representation we know that $\k$ is a compact Lie algebra and thus $\k$ admits positive definite $\ad(\k)$-invariant metrics. Let $B$ be some $\ad(\k)$-invariant metric on $\k$. Later on we will have two copies of the Lie algebra $\k$. To keep notation consistent with the sequel we let $\n=\k$ be the other copy, even though at this moment this notation has no use. 

\begin{defn}\label{def:k-extension inf model}
Let $g=B+g_0$ be a metric on $\n\oplus \m$ with $B$ any $\ad(\k)$-invariant metric on $\n$. Let $k_{1},\dots,k_{l}$ be an orthonormal basis of $\k$ and denote by $n_1,\dots, n_l$ the corresponding basis of $\n$. We define a pair of tensors $(T,R)$,  $T\in \Lambda^3(\n\oplus \m)$ and $R:\Lambda^2(\n\oplus \m)\odot \Lambda^2(\n\oplus \m)$, by
\begin{equation}
T=T_{0}+\sum_{i=1}^{l}\varphi(k_i)\wedge n_i+2T_{\n},\label{eq: T}
\end{equation}
where 
\[
T_{\n} (X,Y,Z) = B([X,Y],Z),\quad \mbox{for}~X,Y,Z\in \n,
\]
and $[-,-]$ is the Lie bracket of $\n=\k$, and $\varphi(k_i)\in \so(\m)\cong \Lambda^2\m$ is identified with a 2-form on $\m$. For the following Lie algebra representation we use the notation:
\[
\psi = \ad\oplus \varphi: \k \to \so(\n\oplus \m),
\]
where $\ad:\k \to \so(\k) = \so(\n)$ is just the adjoint representation. The curvature tensor $R$ is defined as
\begin{equation}
R=R_{0}+\sum_{i=1}^{l} \psi(k_i)\odot \psi(k_i),\label{eq: R}
\end{equation}
We call the pair $(T,R)$ the $(\k,B)$-\emph{extension} of $(T_0,R_0)$.
\end{defn}

We will prove that $(T,R)$ defines an infinitesimal model of a naturally reductive space on $(\n\oplus \m,g)$. For this we have to show that $T$ and $R$ are invariant under $\im(R)$ and that the first Bianchi identity is satisfied. To prove this we first have a little algebraic lemma.

\begin{defn}
Let $(V,g)$ be a finite dimensional vector space with a positive definite metric $g$. Let $\alpha\in\Lambda^{p}V$ and $\beta\in\Lambda^{q}V$ then we define
a $(p+q-2)$-form by 
\[
\alpha \barwedge\beta=\sum_{i=1}^{n}(e_{i}\lrcorner\alpha)\wedge(e_{i}\lrcorner\beta),
\]
where $e_{1},\dots,e_{n}$ is an orthonormal basis of $V$.
\end{defn}

Note that the operation $\alpha\barwedge\beta$ is independent of the basis. One easily checks that:

\begin{lem}
\label{lem:square operation of 2-forms}
If $\alpha\in\Lambda^2 V \cong \so(n)$ and $\beta\in\Lambda^q V$ then $\alpha\barwedge\beta=\alpha\cdot\beta$,
where $\cdot$ denotes the standard action of $\so(n)$ on $\Lambda^q V$ as derivations. Furthermore if $\alpha,\beta\in\Lambda^2 V$ then $\alpha\cdot\beta=[\alpha,\beta]$.
\end{lem}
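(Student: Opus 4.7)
The plan is to reduce both identities to a direct computation on decomposable wedge products. Since both sides of $\alpha \barwedge \beta = \alpha \cdot \beta$ are bilinear in $\alpha$ and $\beta$, and since the $\barwedge$ operation is basis-independent (as the author has already noted), it suffices to verify the identity for $\alpha = e_i \wedge e_j$ with $e_i, e_j$ elements of an orthonormal basis, and for $\beta$ a simple $q$-form $v_1 \wedge \cdots \wedge v_q$.

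First I would compute the left-hand side. Using $e_k \lrcorner (e_i \wedge e_j) = \delta_{ik} e_j - \delta_{jk} e_i$, only two terms in the sum $\sum_k (e_k \lrcorner \alpha) \wedge (e_k \lrcorner \beta)$ survive, yielding
\[
(e_i \wedge e_j) \barwedge \beta = e_j \wedge (e_i \lrcorner \beta) - e_i \wedge (e_j \lrcorner \beta).
\]
Next I would compute the right-hand side. Under the identification $\Lambda^2 V \cong \so(V)$ determined by $g(Ax, y) = \alpha(x, y)$, the 2-form $e_i \wedge e_j$ corresponds to the endomorphism $A_{ij}\colon v \mapsto g(e_i,v)\,e_j - g(e_j,v)\,e_i$. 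Extending by the derivation rule,
\[
\alpha\cdot(v_1\wedge\cdots\wedge v_q) = \sum_{m=1}^{q} v_1 \wedge \cdots \wedge A_{ij}(v_m) \wedge \cdots \wedge v_q.
\]
Rewriting each summand using $e_i \lrcorner \beta = \sum_m (-1)^{m-1} g(e_i,v_m)\, v_1 \wedge \cdots \widehat{v_m} \cdots \wedge v_q$ collapses the expression to $e_j \wedge (e_i \lrcorner \beta) - e_i \wedge (e_j \lrcorner \beta)$, matching the left-hand side.

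For the second claim, I specialize to $\beta \in \Lambda^2 V$. Under the identification $\Lambda^2 V \cong \so(V)$, the natural derivation action of $\alpha \in \so(V)$ on $\Lambda^2 V$ coincides with the adjoint action of $\so(V)$ on itself, so $\alpha \cdot \beta = [\alpha,\beta]_{\so(V)} = \alpha\beta - \beta\alpha$. This can either be quoted as a standard fact about the adjoint representation of $\so(V)$ on $\so(V) \subset \End(V)$, or verified on basis elements $\alpha = e_i \wedge e_j$, $\beta = e_k \wedge e_l$ using the explicit formulas above.

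The only real obstacle is bookkeeping with sign conventions: one must fix the identification $\Lambda^2 V \cong \so(V)$ (and the corresponding normalization of $\wedge$) once and check that the signs in the computation of $A_{ij}(v_m)$ and in the expansion of $e_i \lrcorner \beta$ are consistent. Once the conventions are pinned down the two identities are essentially formal.
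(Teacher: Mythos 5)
Your proof is correct and complete. The paper does not actually supply an argument for this lemma; it states it with the preamble ``One easily checks that:'' and moves on. Your proof supplies precisely the routine verification the author had in mind: reduce by bilinearity to $\alpha = e_i \wedge e_j$ and decomposable $\beta$, compute the $\barwedge$ side via $e_k \lrcorner (e_i\wedge e_j) = \delta_{ik}e_j - \delta_{jk}e_i$, compute the derivation side via the endomorphism $A_{ij}$ and the expansion of $e_i\lrcorner\beta$, and observe both collapse to $e_j\wedge(e_i\lrcorner\beta) - e_i\wedge(e_j\lrcorner\beta)$. The second assertion is, as you say, the standard identification of the $\so(V)$-derivation action on $\Lambda^2 V$ with the adjoint action, which follows from your first identity at $q=2$. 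The sign-convention caveat you flag is real but you have handled it consistently. Nothing further is needed.
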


\begin{prop} \label{prop:T and R im(R) invariant}
Let $T_0$, $T$, $R_0$ and $R$ be as in Definition \ref{def:k-extension inf model}. Then the tensors $T_0,R_0,T$ and $R$ are $\im(R)$-invariant.
\end{prop}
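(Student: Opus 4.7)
My plan is to reduce the statement to invariance under a set of generators of $\im(R)$ and then verify each ingredient directly. From \eqref{eq: R}, together with the fact that each $\psi(k_i)\odot\psi(k_i)$, viewed as a map $\Lambda^2(\n\oplus\m)\to\Lambda^2(\n\oplus\m)$, has image in the span of $\psi(k_i)$, one obtains the inclusion $\im(R)\subseteq\im(R_0)+\psi(\k)$ of subspaces of $\so(\n\oplus\m)$; here $\im(R_0)\subset\ad(\h)\subset\so(\m)$ is viewed inside $\so(\n\oplus\m)$ by extension by zero on $\n$. Thus it will suffice to verify invariance of each of $T_0,R_0,T,R$ under the two families of endomorphisms $\ad(h)|_\m$ for $h\in\im(R_0)$ and $\psi(k)$ for $k\in\k$.

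The key algebraic input is that every $k\in\k\subset\s$ is a derivation of $\g$ with $k(\h)=0$ and $k(\m)\subset\m$. Applying $k([X,Y]_\g)=[k(X),Y]+[X,k(Y)]$ to $X,Y\in\m$ and separating $\h$- and $\m$-components yields, via the Nomizu formulas $T_0(X,Y)=-[X,Y]_\m$ and $R_0(X,Y)=-[X,Y]_\h$ for the base, the two identities $\varphi(k)\cdot T_0=0$ and $R_0(\varphi(k)X,Y)+R_0(X,\varphi(k)Y)=0$. Using instead $k([h,X])=[h,k(X)]$ for $h\in\h$ gives $[\varphi(k),\ad(h)|_\m]=0$, hence $[\ad(h),\psi(k_i)]=0$ in $\so(\n\oplus\m)$. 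Combined with \eqref{eq:par T and R} applied to the base (which directly gives $\im(R_0)$-invariance of $T_0$ and $R_0$), this takes care of all invariances of $T_0$ and $R_0$.

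For $T$ and $R$ the verifications are then bookkeeping. Under $\ad(h)$ the sums $\sum_i\varphi(k_i)\wedge n_i$ and $\sum_i\psi(k_i)\odot\psi(k_i)$ are invariant term by term thanks to $[\ad(h),\psi(k_i)]=0$ and $\ad(h)n_i=0$, while $T_\n$ is invariant because $\ad(h)$ vanishes on $\n$. Under $\psi(k)$, $T_\n$ is invariant by $\ad(\k)$-invariance of $B$, and $T_0$, $R_0$ are handled by the identities derived above. For the two remaining sums I plan to expand $[k_a,k_i]=\sum_j c^j_{a,i}k_j$ in the orthonormal basis and use that $\ad(\k)$-invariance of $B$ forces the total antisymmetry $c^j_{a,i}+c^i_{a,j}=0$: the derivation of $\sum_i\varphi(k_i)\wedge n_i$ collapses to $\sum_{i,j}(c^j_{a,i}+c^i_{a,j})\varphi(k_j)\wedge n_i=0$, and the derivation of $\sum_i\psi(k_i)\odot\psi(k_i)$ becomes $2\sum_{i,j}c^j_{a,i}\psi(k_j)\odot\psi(k_i)$, which vanishes as the contraction of a coefficient antisymmetric in $(i,j)$ against the $(i,j)$-symmetric tensor $\psi(k_j)\odot\psi(k_i)$. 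The only real obstacle I anticipate is not conceptual but organizational: keeping the derivation action, the symmetry of $\odot$, and the antisymmetry of structure constants properly aligned throughout these cross-term calculations.
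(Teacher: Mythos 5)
Your proof is correct and follows essentially the same route as the paper's: you reduce to invariance under the spanning set $\im(R_0)+\psi(\k)$, then use the derivation property of $\k\subset\s$ (in the form $k(\h)=0$, $k(\m)\subset\m$) and the $\ad(\k)$-invariance of $B$ exactly as the paper does. The one small stylistic divergence is in the $\psi(\k)$-invariance of $R_0$ — the paper argues via commutation with $\im(R_0)$ (implicitly using that the symmetric operator $R_0$ is a sum of squares of elements of $\im(R_0)$), while you separate $\h$- and $\m$-components of the derivation identity together with the Nomizu formula $R_0(X,Y)=-[X,Y]_\h$ — and in the cross-term computations, where you package the paper's explicit basis expansion into structure constants $c^j_{a,i}$ and invoke their antisymmetry; both are the same underlying calculation.
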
 
\begin{proof}
It suffices to show that $T_0,R_0,T$ and $R$ are invariant under $\mbox{im}(R_0)+\psi(\k)$, because
\[
\im(R) \subset \mbox{im}(R_0)+\psi(\k)\subset \so(\n\oplus \m).
\]
For every $k\in \k$ and $X,~Y,~Z\in \m$ we have
\begin{align*}
(\psi(k)\cdot T_0)(X,Y,Z) &= -T_0(\psi(k)(X),Y,Z)-T_0(X,\psi(k)(Y),Z)-T_0(X,Y,\psi(k)(Z)) \\
                    &= g([\psi(k)(X),Y],Z)+g([X,\psi(k)(Y)],Z)+g([X,Y],\psi(k)(Z)) \\
                    &= g([\psi(k)(X),Y],Z)+g([X,\psi(k)(Y)],Z)-g([\psi(k),[X,Y]],Z) = 0.
\end{align*}
Hence we get that $\psi(\k)$ stabilizes $T_0$. The invariants of $T_{\n}$ under $\psi(\k)$ is just the Jacobi identity of $\n=\k$. To see that the second term in \eqref{eq: T} is invariant under $\k$ we do a little computation. Let $k\in \k$
\[
\psi(k)\cdot \left( \sum_{i=1}^l \varphi(k_i)\wedge n_i \right)=\left( \sum_{i=1}^l [\varphi(k),\varphi(k_i)]\wedge n_i +\varphi(k_i)\wedge \ad(k)(n_i) \right).
\]
For the second term we have
\begin{align}
\sum_{i=1}^l \varphi(k_i)\wedge \ad(k)(n_i) &= \sum_{i,j=1}^l \varphi(k_i)\wedge B([k,n_i] ,n_j)n_j = \sum_{i,j=1}^l \varphi(B([k,n_i] ,n_j) k_i)\wedge n_j \nonumber \\
 &= \sum_{i,j=1}^l -\varphi(B([k,n_j],n_i) k_i)\wedge n_j 
 = \sum_{i,j=1}^l -\varphi(B([k,k_j] ,k_i) k_i)\wedge n_j\nonumber \\
 &= \sum_{j=1}^l -\varphi([k,k_j])\wedge n_j = \sum_{j=1}^l -[\varphi(k),\varphi(k_j)]\wedge n_j. \label{eq: term of T invariant}
\end{align}
Plugging these results back into the first line we see that 
\[
\psi(k)\cdot \left( \sum_{i=1}^l \varphi(k_i)\wedge n_i \right)=0.
\]
This shows that $T$ is invariant under $\psi(\k)$. We have $\im(R_0)\subset \so(\m)\subset \so(\n\oplus \m)$, so $\im(R_0)$ acts trivially on $\n$. This immediately tells us that $\im(R_0)$ leaves $T_\n$ invariant.  
The definition of $\s$ gives us that for every $k\in \k\subset \s$, $h\in \h$ and $m\in \m$ we have
\[
k([h,m]) = [k(h),m]+[h,k(m)]= [h,k(m)].
\]
This implies that $\varphi(k)$ commutes with every element of $\ad(\h)$. By Lemma \ref{lem:square operation of 2-forms} we immediately see that $\im(R_0)$ also leaves the second summand of \eqref{eq: T} invariant. This concludes that $T_0$ and $T$ are invariant under $\im(R)$. 

The tensor $R_0$ is invariant under $\psi(\k)$, since $\psi(\k)$ commutes with $\im(R_0)$. The same argument also tells us that $\sum_{i=1}^{l}\psi(k_i)\odot \psi(k_i)$ is invariant under $\im(R_0)$. Lastly, by a similar computation as \eqref{eq: term of T invariant} one can see that the tensor $\sum_{i=1}^{l}\psi(k_i)\odot \psi(k_i)$ is invariant under $\psi(\k)$. We conclude that $R_0$ and $R$ are $\im(R)$-invariant.
\end{proof}

\begin{prop}\label{prop:Bianchi identity}
The pair of tensors $(T,R)$ from Definition \ref{def:k-extension inf model} satisfies the first Bianchi identity.
\end{prop}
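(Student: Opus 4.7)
I would use the 4-form reformulation from the Remark: the first Bianchi identity for $(T, R)$ amounts to
\[
R^{\Lambda^4} = \sum_{e} (e \lrcorner T) \wedge (e \lrcorner T),
\]
with $e$ ranging over an orthonormal basis of $\n \oplus \m$ obtained by combining $\{n_k\}$ and a chosen orthonormal basis of $\m$. Writing $T = T_0 + T_1 + T_2$ with $T_1 := \sum_j \varphi(k_j) \wedge n_j$ and $T_2 := 2T_\n$, and $R = R_0 + R'$ with $R' := \sum_i \psi(k_i) \odot \psi(k_i)$, the right-hand side decomposes into the nine bilinear contributions $\sum_e (e \lrcorner T_a) \wedge (e \lrcorner T_b)$ for $a, b \in \{0, 1, 2\}$, which I would match type by type against $R^{\Lambda^4} = R_0^{\Lambda^4} + (R')^{\Lambda^4}$.

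Several pieces reduce to facts already available. The $(0,0)$ term equals $R_0^{\Lambda^4}$ by the first Bianchi identity for the input model $(T_0, R_0)$. The cross terms $(0, 2)$ vanish for support reasons: $T_0 \in \Lambda^3 \m$ and $T_2 \in \Lambda^3 \n$ sit on complementary subspaces. The cross terms $(0, 1)$, after extracting the $n_j$ factors, take the form $2 \sum_j (T_0 \barwedge \varphi(k_j)) \wedge n_j$; Lemma \ref{lem:square operation of 2-forms} rewrites $T_0 \barwedge \varphi(k_j)$ as the action $\varphi(k_j) \cdot T_0$, which vanishes by the $\varphi(\k)$-invariance of $T_0$ established in Proposition \ref{prop:T and R im(R) invariant}. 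Finally, the $(2, 2)$ term equals $4 \sum_k (n_k \lrcorner T_\n) \wedge (n_k \lrcorner T_\n)$, which is the 4-form side of the first Bianchi identity for the infinitesimal model $(-T_\n, 0)$ of the Lie group $K$ with its bi-invariant metric $B$. That identity is equivalent to the Jacobi identity on $\k$ together with $\ad$-invariance of $B$, and yields vanishing of this term.

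What remains is to match the $(1, 1)$ and $(1, 2)$ contributions against $(R')^{\Lambda^4}$. Splitting $\psi(k_i) = \ad(k_i) + \varphi(k_i)$, one has
\[
R' = \sum_i \ad(k_i) \odot \ad(k_i) + 2 \sum_i \ad(k_i) \odot \varphi(k_i) + \sum_i \varphi(k_i) \odot \varphi(k_i),
\]
so $(R')^{\Lambda^4}$ splits into a pure $\Lambda^4 \n$ component, a mixed $\Lambda^2 \m \wedge \Lambda^2 \n$ component, and a pure $\Lambda^4 \m$ component. The pure $\Lambda^4 \m$ part matches the $e \in \n$ piece of $(1, 1)$, using the identity $n_k \lrcorner T_1 = \varphi(k_k)$. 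The pure $\Lambda^4 \n$ part matches the $(1, 2)$ contribution after identifying $n_k \lrcorner T_\n$ with $\ad(k_k)$ via $B$; the factor $2$ in front of $T_\n$ in the definition of $T$ is chosen precisely so that these constants line up.

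The main obstacle is the mixed $\Lambda^2 \m \wedge \Lambda^2 \n$ sector. The $e \in \m$ portion of $(1, 1)$ contributes, via Lemma \ref{lem:square operation of 2-forms} and the fact that $\varphi$ is a Lie algebra homomorphism, a term of the form $-\sum_{j, j'} \varphi([k_j, k_{j'}]) \wedge n_j \wedge n_{j'}$, which must be reproduced by the Bianchi projection of $2 \sum_i \ad(k_i) \odot \varphi(k_i)$. The verification reduces to the total antisymmetry of the structure constants $B([k_a, k_b], k_c)$ in the orthonormal basis of $\k$, which follows from $\ad$-invariance of $B$, combined with a careful accounting of the signs arising from interchanging 1-form factors. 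Beyond this bookkeeping no new ideas are required.
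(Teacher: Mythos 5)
Your overall strategy is the one the paper uses: reduce the first Bianchi identity to $R^{\Lambda^4}=2\sigma_T$, split $T=T_0+T_1+T_2$ with $T_1=\sum_j\varphi(k_j)\wedge n_j$ and $T_2=2T_\n$, split $\psi(k_i)=\ad(k_i)+\varphi(k_i)$ on the curvature side, expand $\sum_e(e\lrcorner T)\wedge(e\lrcorner T)$ bilinearly, and dispose of the $(0,0)$, $(0,1)$, $(0,2)$, $(2,2)$ pieces by the Bianchi identity for $T_0$, $\varphi(\k)$-invariance of $T_0$, support reasons, and the Jacobi identity for $\n$, respectively. All of that is correct.

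There is however a genuine error in the sector bookkeeping, and it is not harmless. The $(1,2)$ term does not live in $\Lambda^4\n$. Since $T_2\in\Lambda^3\n$, the only nonzero contributions come from $e=n_k$, for which $n_k\lrcorner T_1=\varphi(k_k)\in\Lambda^2\m$ and $n_k\lrcorner T_2=2\ad(k_k)\in\Lambda^2\n$; hence $(1,2)+(2,1)=4\sum_k\varphi(k_k)\wedge\ad(k_k)$, a \emph{mixed} $\Lambda^2\m\wedge\Lambda^2\n$ form. The genuinely pure $\Lambda^4\n$ piece of the left side is the $(2,2)$ term, which vanishes by the Jacobi identity — exactly as does the pure $\Lambda^4\n$ part $\sum_i\ad(k_i)\wedge\ad(k_i)$ of $(R')^{\Lambda^4}$; there is nothing there for $(1,2)$ to balance. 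Once $(1,2)$ is omitted from the mixed sector, the matching you propose fails: the $e\in\m$ portion of $(1,1)$ is $-\sum_{i,j}[\varphi(k_i),\varphi(k_j)]\wedge n_i\wedge n_j=-2\sum_i\varphi(k_i)\wedge\ad(k_i)$, whereas the mixed part of $(R')^{\Lambda^4}$ is $+2\sum_i\varphi(k_i)\wedge\ad(k_i)$ — opposite in sign, not a bookkeeping nuance but a contradiction. The identity closes only because $(1,1)_{e\in\m}+(1,2)+(2,1)=-2\sum\varphi\wedge\ad+4\sum\varphi\wedge\ad=2\sum\varphi\wedge\ad$. This is also where the factor $2$ in front of $T_\n$ actually earns its keep; you correctly sensed that this factor is forced, but attached its role to the wrong sector. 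With the $(1,2)$ contribution placed correctly in the mixed sector, your argument becomes the paper's proof.
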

\begin{proof}
Let $e_{1},\dots,e_{n}$ be an orthonormal basis of $\m$. We will compute $\sigma_{T}$ using the following definition 
\[
\sigma_{T}=\frac{1}{2}\left(\sum_{p=1}^n(e_p\lrcorner T)\wedge(e_p\lrcorner T)+\sum_{i=1}^l (n_i\lrcorner T)\wedge(n_i\lrcorner T)\right).
\]
For $(e_p\lrcorner T)\wedge (e_p\lrcorner T)\equiv (e_i \lrcorner T)^{\wedge 2}$ we have
\begin{align*}
(e_i \lrcorner T)^{\wedge 2} &= (e_p \lrcorner T_0)^{\wedge 2} +2\sum_{i=1}^l (e_p\lrcorner T_0)\wedge\varphi(k_i)(e_p)\wedge n_i+\sum_{i,j=1}^l\varphi(k_i)(e_p)\wedge n_i\wedge\varphi(k_j)(e_p)\wedge n_j\\
 & = (e_p\lrcorner T_0)^{\wedge 2} +2\sum_{i=1}^{l}(e_p\lrcorner T_0)\wedge\varphi(k_i)(e_p)\wedge n_i-\sum_{i,j=1}^{l}\varphi(k_i)(e_p)\wedge\varphi(k_j)(e_p)\wedge n_i\wedge n_j.
\end{align*}
Now we sum these three summands over $p$. The first summand this
gives
\[
2\sigma_{T_0}=\sum_{p=1}^n(e_p\lrcorner T_0)\wedge(e_p \lrcorner T_0),
\]
For the second we obtain 
\begin{align*}
2\sum_{p=1}^n\sum_{i=1}^l(e_p\lrcorner T_0)\wedge\varphi(k_i)(e_p)\wedge n_i &=  2\sum_{i=1}^l(\varphi(k_i)\barwedge T_0)\wedge n_i\\
 & = 2\sum_{i=1}^l(\varphi(k_i)\cdot T_0)\wedge n_i=0,
\end{align*}
where we used Lemma \ref{lem:square operation of 2-forms} and the last equality follows from the fact that $\varphi(k_i)$ stabilizes $T_0$.
For the third summand we again use Lemma \ref{lem:square operation of 2-forms} and obtain:
\begin{align*}
-\sum_{p=1}^n\sum_{i,j=1}^l\varphi(k_i)(e_p)\wedge\varphi(k_j)(e_p)\wedge n_i\wedge n_j &=-\sum_{i,j=1}^l[\varphi(k_i),\varphi(k_j)]\wedge n_i\wedge n_j\\
 &= -2\sum_{i=1}^l \varphi(k_i)\wedge \ad(k_i),
\end{align*}
where $\ad(k_i)\in \so(\k)\cong \Lambda^2\n$ and the last equality follows from:
\begin{align*}
\left( \sum_{i=1}^l \varphi(k_i)\wedge \ad(k_i)\right)(n_p,n_q) &= 
\sum_{i=1}^l \varphi(B([k_i,n_p],n_q)k_i)\\
 &= \sum_{i=1}^l \varphi(B(k_i,[k_p,k_q])k_i)= \sum_{i=1}^l \varphi(B(k_i,[k_p,k_q])k_i)\\
 &= \varphi([k_p,k_q]) =  [\varphi(k_p),\varphi(k_q)]\\
 &= \frac{1}{2} \left(\sum_{i,j=1}^l[\varphi(k_i),\varphi(k_j)]\wedge n_i\wedge n_j\right)(n_p,n_q).
\end{align*}
The last term to compute for $\sigma_{T}$ is 
\begin{align*}
\sum_{i=1}^l(n_i\lrcorner T)\wedge (n_i\lrcorner T)& = \sum_{i=1}^l(\varphi(k_i)\wedge\varphi(k_i)+4\varphi(k_i)\wedge(n_i\lrcorner T_{\n}))+2\sigma_{2T_{\n}}\\
 &=  \sum_{i=1}^l(\varphi(k_i)\wedge\varphi(k_i)+4\varphi(k_i)\wedge(n_i\lrcorner T_{\n})) \\
 &=  \sum_{i=1}^l(\varphi(k_i)\wedge\varphi(k_i)+4\varphi(k_i)\wedge\ad(k_i),
\end{align*}
here we used the Jacobi identity for $\n$ in the form $\sigma_{T_{\n}}=0$: 
\begin{align}\label{eq:Jacobi in terms of sigma_T}
2\sigma_{T_\n}(X,Y,Z) &= \sum_{i=1}^l (n_i\lrcorner T)\wedge
(n_i\lrcorner T)(X,Y,Z) = \left(\sum_{i=1}^l \ad(k_i)\wedge \ad(k_i)\right) (X,Y,Z) \notag \\
 &= \mf S^{X,Y,Z}\sum_{i=1}^l B([k_i,X],Y) [k_i,Z]  = \mf S^{X,Y,Z}[[X,Y],Z]=0.
\end{align}
Summing all the terms we obtain that
\[
\sigma_T=\sigma_{T_0}+\frac{1}{2}\sum_{i=1}^{l}\left(2\varphi(k_i)\wedge\ad(k_i)+\varphi(k_i)\wedge\varphi(k_i)\right).
\]
Computing $R^{\Lambda^4}$ is a bit easier. We have
\begin{align*}
R^{\Lambda^4} &= R_0^{\Lambda^4} + \sum_{i=1}^l (\varphi(k_i)+\ad(k_i))\wedge (\varphi(k_i)+\ad(k_i)) \\
 &= R_0^{\Lambda^4}+\sum_{i=1}^l\varphi(k_i)\wedge\varphi(k_i)+\ad(k_i)\wedge\ad(k_i) + 2\varphi(k_i)\wedge\ad(k_i)\\
  &= R_0^{\Lambda^4}+\sum_{i=1}^l\varphi(k_i)\wedge\varphi(k_i) + 2\varphi(k_i)\wedge\ad(k_i).
\end{align*}
Here we used that $\sum_{i=1}^l \ad(k_i)\wedge \ad(k_i)=0$ by \eqref{eq:Jacobi in terms of sigma_T}. Hence we see that this torsion and curvature satisfy the Bianchi identity.
\end{proof}

We obtain one of our main results directly from Proposition \ref{prop:T and R im(R) invariant} and \ref{prop:Bianchi identity}:

\begin{theorem}
Let $(T_0,R_0)$ be an infinitesimal model of a naturally reductive space on $(\m,g_0)$. Any $(\k,B)$-extension $(T,R)$ defines an infinitesimal model of a naturally reductive space on $(\n\oplus \m,g = B + g_0)$. 
\end{theorem}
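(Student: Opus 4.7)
The plan is to show that the pair $(T,R)$ from Definition \ref{def:k-extension inf model} satisfies the three defining conditions for an infinitesimal model of a naturally reductive space on $(\n \oplus \m, g)$: namely, total skew-symmetry of the torsion, the parallel conditions $R \cdot T = R \cdot R = 0$, and the two Bianchi identities \eqref{eq:B1} and \eqref{eq:B2}. I would then observe that each of these is either built into the construction or has already been established in the preceding two propositions, so the theorem reduces to an assembly step.

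First I would note that the natural reductivity condition is encoded in the requirement that $T$ is totally skew-symmetric. By construction, $T = T_0 + \sum_i \varphi(k_i) \wedge n_i + 2T_\n$ lies in $\Lambda^3(\n \oplus \m)$, so this is immediate. Similarly, the symmetry $R \in \Lambda^2(\n \oplus \m) \odot \Lambda^2(\n \oplus \m)$ that is needed for the naturally reductive setting is built into the definition $R = R_0 + \sum_i \psi(k_i) \odot \psi(k_i)$.

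Next, I would apply Proposition \ref{prop:T and R im(R) invariant} to obtain the parallel conditions: the proposition asserts that $T$ and $R$ are $\im(R)$-invariant, which is precisely equation \eqref{eq:par T and R}. The first Bianchi identity \eqref{eq:B1} is exactly the content of Proposition \ref{prop:Bianchi identity}, where the equality $R^{\Lambda^4} = 2\sigma_T$ is verified term by term. For the second Bianchi identity \eqref{eq:B2}, I would invoke the remark in Section \ref{sec:preliminaries} that for naturally reductive infinitesimal models, \eqref{eq:B2} holds automatically once $R$ is a symmetric map $\Lambda^2 \to \Lambda^2$ with respect to the Killing form on $\so(\n \oplus \m)$, which is guaranteed by the form of $R$ described above.

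There is essentially no obstacle remaining: the two preceding propositions already contain the nontrivial computational content, and the theorem amounts to recording that their conclusions, together with the manifest skew-symmetry of $T$ and symmetry of $R$, exhaust the axioms of an infinitesimal model of a naturally reductive space on $(\n \oplus \m, g)$.
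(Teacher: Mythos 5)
Your proposal is correct and follows essentially the same route as the paper: the paper states the theorem as an immediate consequence of Proposition \ref{prop:T and R im(R) invariant} (which gives $\im(R)$-invariance of $T$ and $R$, i.e.\ condition \eqref{eq:par T and R}) and Proposition \ref{prop:Bianchi identity} (which gives \eqref{eq:B1}), with skew-symmetry of $T$, symmetry of $R$, and the automatic validity of \eqref{eq:B2} all coming from the definitions and the remark quoted from \cite{AgricolaFerreiraFriedrich2015}, exactly as you lay out.
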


Note that there can be a multitude of different Lie algebras $\k\subset \s$ for a given infinitesimal model $(T_0,R_0)$, e.g. Example \ref{ex:su(3)/s1 times R4}. Also any $\ad(\k)$-invariant metric $B$ on $\k$ gives us a $(\k,B)$-extension. This means that the newly constructed infinitesimal models always come with a parameter family of naturally reductive structures.

The next thing we will do is apply the Nomizu construction to the new infinitesimal models we just constructed. 

\begin{defn}\label{def:g(k)}
Let $(T_0,R_0)$ be an infinitesimal model of a naturally reductive space on $(\m,g_0)$. Let $\h:=\im(R_0)$ and let $(T,R)$ be a $(\k,B)$-extension of $(T_0,R_0)$. We define the following vector space
\[
\g(\k):=\h\oplus \k \oplus \n\oplus \m
\]
together with $g:=B+g_0$ as metric on $\n\oplus \m$. In the proof of Proposition \ref{prop:T and R im(R) invariant} we proved that
\[
\ad\oplus \psi:\h\oplus \k \to \{h\in \so(\n\oplus \m) : h\cdot T= h\cdot R=0\}.
\]
We can define a Lie bracket by \eqref{eq:Nomuzi Lie bracket} for the pair $(T,R)$. 
This vector space $\g(\k)$ is a subalgebra of \eqref{eq:Nomizu Lie algebra} for the pair $(T,R)$ by Remark \ref{rem:regular inf model for some im(R) < k < h}. We will call $\h\oplus \k$ the \emph{isotropy subalgebra} of $\g(\k)$.
\end{defn}

\begin{rem}
The constructed Lie algebra $\g(\k)$ is known as the double extension of $\g$ by $\k$. This construction is used in \cite{MedinaRevoy1985} to describe the set of all Lie algebras which possess an invariant non-degenerate bilinear form.
\end{rem}

Whenever we will construct a reductive decomposition $\g(\k)=\h\oplus \k\oplus \n\oplus \m$ from $\g=\h\oplus \m$ then we will call $\g=\h\oplus \m$ the \emph{base space} and $\g(\k)=\h\oplus \k\oplus \n\oplus \m$ the \emph{total space}. In the next section we briefly discuss that if both the base space and the total space are regular then the total space is a  homogeneous fiber bundles over the base space and the fiber directions are $\n$. We would like to point out that it is also possible to start from a locally homogeneous space $(T_0,R_0)$ which is not globally homogeneous and still obtain a globally homogeneous space with infinitesimal model $(T,R)$.

\section{Further investigation of the spaces $\g(\k)$ \label{sec:further investigation}}
In this section we will investigate when the newly constructed infinitesimal models $(T,R)$ are regular. For particular base spaces we will explicitly give a transitive group of isometries for the infinitesimal models $(T,R)$ and describe the naturally reductive structure on its Lie algebra. From now on $G(\k)$ will denote the simply connected Lie group with Lie algebra $\g(\k)$ and $H(\k)$ will be the connected subgroup with subalgebra $\h(\k)$. By Remark \ref{rem:regular inf model for some im(R) < k < h} we have that $H(\k)$ is closed is $G(\k)$ if and only if the infinitesimal model $(T,R)$ is regular.

The first thing we want to point out is that the diagonal \[
\Delta \k \subset \k\oplus \k \cong \k \oplus \n\subset \g(\k)
\]
is a non-trivial Abelian ideal of $\g(\k)$. In particular $\g(\k)$ is never semisimple. We will denote $\Delta \k\subset \k\oplus \n$ by $\a$. 

\begin{lem} \label{lem:ideals in g(k)}
Let $\g(\k)$ be the Lie algebra from Definition \ref{def:g(k)}, then the following hold:
\begin{enumerate}[label = \roman*)]
 \item $\a$ commutes with $\h\oplus \m$,
 \item the linear subspace $\mf a\subset \g(\k)$ is an Abelian ideal,
 \item $\mf l:= \h\oplus \mf a\oplus \m$ is an ideal,
 \item $\g(\k)\cong \k \ltimes (\h \oplus \mf a \oplus \m)$.
\end{enumerate}
\end{lem}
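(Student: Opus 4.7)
The plan is to apply the Nomizu bracket formula \eqref{eq:Nomuzi Lie bracket} to the $(\k,B)$-extension $(T,R)$ and to tabulate once and for all the brackets on pairs of the summands $\h,\k,\n,\m$ of $\g(\k)$. Write $\hat{k}\in\n$ for the element corresponding to $k\in\k$ under the identification $\n=\k$, so that $\a=\{k+\hat{k}:k\in\k\}$. Three preliminary brackets do the bulk of the work. First, $[\h,\k]=0$ in $\g(\k)$: $\h$ acts trivially on $\n$, and for $k\in\k\subset\s$ one has $[\varphi(k),\ad(h)]=0$ on $\m$, as observed in the proof of Proposition \ref{prop:T and R im(R) invariant}. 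Second, for $X\in\m$ a short check using $\varphi(k_i)|_\n=0$, $n_i|_\m=0$, and the fact that $T_\n$ vanishes whenever an argument lies in $\m$ gives $R(\hat{k},X)=0$ and $T(\hat{k},X)=\varphi(k)(X)\in\m$, hence $[\hat{k},X]=-\varphi(k)(X)$. Third, for $\hat{k},\hat{k'}\in\n$, invoking $\ad(\k)$-invariance of $B$ in the form $\sum_i B([k_i,k],k')k_i=[k,k']_\k$ yields $R(\hat{k},\hat{k'})=[k,k']_\k$ in the isotropy copy of $\k$, while $T_\n$ produces $T(\hat{k},\hat{k'})=2\widehat{[k,k']_\k}\in\n$; consequently $[\hat{k},\hat{k'}]=-[k,k']_\k-2\widehat{[k,k']_\k}$.

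Claims (i) and (ii) then follow by direct cancellation. For (i), $[a,h]=0$ is immediate from the first preliminary bracket, while for $X\in\m$ one has $[a,X]=[k,X]+[\hat{k},X]=\varphi(k)(X)-\varphi(k)(X)=0$. For abelianness of $\a$, expanding $[a_1,a_2]$ with $a_i=k_i+\hat{k}_i$ into four summands produces $[k_1,k_2]_\k+2\widehat{[k_1,k_2]_\k}$ from the three isotropy-type brackets and exactly the opposite quantity from the third preliminary bracket. Combined with the routine observations that $\psi(\k)$ preserves the diagonal (giving $[\k,\a]\subset\a$) and that the same preliminary bracket yields $[\n,\a]\subset\a$, this proves (ii).

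For (iii) I would first check that $\mf l$ is a subalgebra: the only non-obvious bracket is $[X,Y]$ for $X,Y\in\m$, which by the explicit formulas reads $[X,Y]=-R_0(X,Y)-T_0(X,Y)-\sum_i\varphi(k_i)(X,Y)(k_i+\hat{k}_i)$, visibly lying in $\h\oplus\a\oplus\m=\mf l$. Ideality then follows from the tabulated brackets: both $\k$ and $\n$ send $\h$ into $\h$ or zero, send $\m$ into $\m$, and preserve $\a$ by (ii). Claim (iv) is a matter of vector-space accounting: $\hat{k}=0$ forces $k=0$, so $\k\cap\a=0$ inside $\k\oplus\n$, hence $\g(\k)=\k\oplus\mf l$ as vector spaces with $\mf l$ an ideal by (iii) and $\k$ a subalgebra, yielding the semidirect decomposition.

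The main obstacle is the third preliminary bracket: the contributions from the curvature term $\sum_i\psi(k_i)\odot\psi(k_i)$, the cross-torsion $\sum_i\varphi(k_i)\wedge n_i$, and the intrinsic torsion $2T_\n$ must all conspire, through $\ad(\k)$-invariance of $B$, to produce precisely the coefficients $1$ and $2$ that make the cancellation in the proof of (ii) work.
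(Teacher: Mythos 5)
Your proposal is correct and follows essentially the same route as the paper's proof: both unpack the Nomizu bracket \eqref{eq:Nomuzi Lie bracket} on each pair of summands of $\g(\k)$ and verify the cancellations on the diagonal $\a$, using $\ad(\k)$-invariance of $B$ to match the curvature and torsion contributions. Your tabulation of the three preliminary brackets merely organizes the same computations the paper performs inline.
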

\begin{proof}
\emph{i)} Let $k+n\in \mf a$ and $m\in \m$ then
\[
[k+n,m] = \varphi(k)m - T(n,m) -R(n,m) = \varphi(k)m - T(n,m) = \varphi(k)m - \varphi(k)m = 0.
\]
We used that $R(n,m)=0$ for $n\in\n$ and $m\in \m$. This follows directly from the symmetries of $R$ and the $(\h\oplus \k)$-invariance of the direct sum $\n\oplus \m$. 
Let $h\in \h$ then we already saw in Proposition \ref{prop:T and R im(R) invariant} that $h$ commutes with $\k$ and that it acts trivially on $\n$. In particular $h$ will commutes with $\mf a$.

\emph{ii)} Let $n+k\in \a $ and $n'+k'\in \a$. We have that 
\begin{align*}
[n',n+k] &= [n',n] + [n',k] = -2[n',n]_\n - \sum_{i=1}^l B([k_i,n'],n)k_i + [n',k] \\
 &= - [n',n]_\n -\sum_{i=1}^l B([k_i,n'],n)k_i = - [n',n]_\n -\sum_{i=1}^l B(k_i,[k',k])k_i \\
 &= -[n',n]_\n - [k',k]\in \a,
\end{align*}
where $[-,-]_\n$ denotes the Lie bracket in $\n$. Furthermore we have $[k',n+k]= [n',n]_\n + [k',k]\in \a$. In particular we see that $[n'+k',n+k]=0$, thus $\a$ is Abelian. Moreover we have that $[\k\oplus \n,\a]\subset \a$ and together with \emph{i)} we see that $\a $ is an ideal.

\emph{iii)} We already know that $[\h\oplus \k,\mf l]\subset \mf l$. For $n\in \n$ and $m\in \m$ we have by \emph{i)} that $[n,m] = [k,m]\in \m$, where $k\in \k$ is such that $n+k\in\mf a$. This gives us that $[\h\oplus \k\oplus \n,\mf l]\subset \mf l$. The only remaining thing to check is that for $m_1,m_2\in \m$ also $[m_1,m_2]\in \h\oplus \mf a\oplus \m$. This is equivalent to $[m_1,m_2]_{\k\oplus \n} \in \mf a$. A short computation gives
\begin{align*}
[m_1,m_2]_{\k\oplus \n} &= -\sum_{i=1}^l \varphi(k_i)(m_1, m_2) n_i + \psi(k_i)(m_1, m_2) k_i\\
 &=-\sum_{i=1}^l \varphi(k_i)(m_1, m_2)(n_i+ k_i)\in \mf a.
\end{align*}

\emph{iv)} It only remains to note that $\k$ is a subalgebra of $\g(\k)=\k\oplus (\h\oplus \mf a\oplus \m)$.
\end{proof}

\begin{rem}
It was pointed out to us by Y. Nikonorov that the vectors in $\a$ constitute Killing vectors of constant length on $G(\k)/H(\k)$ as is proven in \cite{Nikonorov2013}. Note that the Lie algebra $\g(\k)$ we consider doesn't have to be the full Lie algebra of Killing fields as is the case in \cite{Nikonorov2013}. However one easily checks that the proves remain valid.
\end{rem}

For now we assume that the newly constructed infinitesimal model $(T,R)$ from an infinitesimal model $(T_0,R_0)$ is regular. As mentioned before this is equivalent to the subgroup $H(\k)\subset G(\k)$ being closed. Furthermore we assume that the base space $G/H$ is also regular and simply connected. We show that $G(\k)/H(\k)$ is a homogeneous fiber bundle over $G/H$ and that the bundle map $G(\k)/H(\k) \to G/H$ is a Riemannian submersion. From Lemma \ref{lem:ideals in g(k)} we know that $\mf r:= \h\oplus \k \oplus \a = \h\oplus \k \oplus \n$ is a subalgebra of $\g(\k)$ and that $\mf r \oplus \m$ is a reductive decomposition. Now we have that $\ad:\mf r \to \so(\m)$ maps into $\{h\in \so(\m):h \cdot T_0 = 0,~h\cdot R_0 =0\}$ by Proposition \ref{prop:T and R im(R) invariant}. Let $\mf q$ denote the kernel of this map. Then we have that $\mf q\subset \mf r\oplus \m$ is an ideal and that $(\mf r\oplus \m)/\mf q$ is naturally identified with a subalgebra of the isometry algebra $\mf{isom}(G/H)$ of $G/H$. Note that $\a \subset \mf q$. This quotient map from $\g(\k)=\mf r\oplus \m$ into to $\mf{isom}(G/H)$ lifts to $G(\k)$, because $G(\k)$ is simply connected.
Hence the group $G(\k)$ acts on $G/H$ by isometries. The stabilizer group of the origin of $G/H$ is a closed subgroup $R$ of $G(\k)$. We readily see that $\mbox{Lie}(R) = \mf r$. This means that we have a homogeneous fiber bundle which is a Riemannian submersion 
\[
R/H(\k) \mbox{ --- } G(\k)/H(\k) \to G(\k)/R \cong G/H.
\]
The fibers $R/H(\k)$ are connected by the long homotopy exact sequence and are described by the reductive decomposition 
\[
\h\oplus \k\oplus \n,
\]
with $\h\oplus \k$ the isotropy algebra. The canonical connection of this reductive decomposition is clearly a naturally reductive connection. Let $A$ be the connected subgroup of $G(\k)$ with Lie algebra $\a$. Note that $A\subset R$ acts already transitively and by isometries on the fibers. Since $A$ is an Abelian Lie group this means that the universal cover of $R/H(\k)$ is isomorphic to the symmetric space $\R^l$. The torsion $T_f$ and curvature $R_f$ of the naturally reductive connection on the fiber $\R^l$ are given by 
\[
T_f = 2T_\n \quad \mbox{and}\quad R_f = \sum_{i=1}^l \ad(k_i)\odot \ad(k_i),
\]
where $T_\n$ is as in Definition \ref{def:k-extension inf model}.

Now we will investigate for certain types of base spaces what kind of naturally reductive spaces our construction gives. For every constructed space we will give an explicit transitive group of isometries and we will describe the naturally reductive structure, i.e. the left invariant objects $(g,T,R)$ on the Lie algebra of this explicit group of isometries.

\subsection{Base space with $\g$ semisimple \label{subsec:g semisimple}}
In this section we will assume that $\g$ is a semisimple Lie algebra. Let $(M,g_0)=(G/H,g_0)$ be a naturally reductive space with respect to the canonical connection of a reductive decomposition $\g=\h\oplus \m$ with infinitesimal model $(T_0,R_0)$. Furthermore we assume that $G$ and $G/H$ are simply connected. By Remark \ref{rem:minimal g for s} we can without loss of generality assume that 
\[
\im(R_0)=\ad(\h)\subset \so(\m).
\]
Note that this means that for the reductive decomposition $\g=\h\oplus \m$ we have that $\g = [\m,\m]+\m $ and then a result by Kostant (\cite{Kostant1956}, see also \cite{D'AtriZiller1979}) tells us that there exists a unique $\ad(\g)$-invariant metric on $\g$ whose restriction to $\m$ is the naturally reductive metric $g_0$ and its restriction to $\h$ is non-degenerate. Moreover for this metric $\h$ and $\m$ are orthogonal to one another. We will denote the $\ad(\g)$-invariant metric on $\g$ by $\overline{g}$. Note that the metric $\overline{g}$ can have signature.

\begin{rem}
We would like to point out that there are many naturally reductive spaces which satisfy the above requirements. We can take any compact simple Lie group $G$ with any closed subgroup $H$ and then $\g = \h \oplus \h^\perp = \h \oplus \m$ is a reductive decomposition, where the orthogonal complement is taken with respect to a multiple of the Killing form. In particular these spaces are normal homogeneous. The canonical connection is naturally reductive with respect to the Killing form on $\g$ restricted to $\m$, moreover $\im(R_0)=\ad(\h)$. Also for $G$ compact and semisimple there are many examples. The difference is that $\im(R_0)=\ad(\h)$ doesn't hold automatically for every subgroup $H$. Also for $G$ non-compact and semisimple it is not hard to construct examples.
\end{rem}

Since $\g$ is semisimple all derivations of $\g$ are inner derivations. We see that the Lie algebra $\mf s$ is given by
\[
\mf s = \mf z \oplus \mf p,
\]
where $\mf z$ is the center of $\h$ and $\mf p$ are all vectors on which $\h$ acts as zero:
\[
\mf p := \{m\in \m : [h,m]=0,~ \forall h\in \h\}.
\]
Let $\k\subset \s$ be a subalgebra with an $\ad(\k)$-invariant metric $B$. We will denote the corresponding subalgebra of $\k$ in $\g$ by $\b$. Let $\b_\p:= \b \cap \p$ and $\b_\z := \b_\p^\perp$, where the orthogonal complement is taken in $\mf b$ with respect to $B$. Note that $\b = \b_\z \oplus \b_\p$ and that both $\b_\z$ and $\b_\p$ are ideals in $\b$. We denote the corresponding decompositions of $\n$, $\k$ and $\a$ by $\n=\n_\z\oplus \n_\p$, $\k=\k_\z\oplus \k_\p$ and $\a = \a_\z \oplus \a_\p$, respectively. Let $b_1,\dots ,b_l$ be an orthonormal basis of $\mf b$ with respect to $B$. Denote the corresponding basis of $\n$ by $n_1,\dots ,n_l$ and that of $\k$ by $k_1,\dots ,k_l$.
Define the following linear map
\[
a:\g \to \g(\k);~ a(x) := \sum_{i=1}^{l} \overline{g}(x,b_i) (n_i+k_i).
\]
With the help of the following lemma we will see that $\g$ is a subalgebra of $\g(\k)$. It will always be clear from the context whether an element $x\in \m$ or $x\in \h$ belongs to $\g$ or $\g(\k)$.
\begin{lem}\label{lem:Lie algebra hom f}
The linear map $f:\g \to \g(\k)$ defined by 
\[
f(x) = x - a(x)
\]
is an injective Lie algebra homomorphism.
\end{lem}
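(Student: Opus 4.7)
The plan is to dispose of injectivity immediately and then reduce the homomorphism property to a single scalar identity coming from the $\ad(\g)$-invariance of $\overline{g}$. Since $x \in \h \oplus \m$ and $a(x) \in \a \subset \k \oplus \n$ lie in complementary summands of $\g(\k) = \h \oplus \k \oplus \n \oplus \m$, the equation $f(x) = x - a(x) = 0$ forces $x = 0$, so $f$ is injective.

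For the bracket, I would first expand
\[
[f(x), f(y)]_{\g(\k)} = [x, y]_{\g(\k)} - [x, a(y)] - [a(x), y] + [a(x), a(y)].
\]
By Lemma \ref{lem:ideals in g(k)}(i)--(ii), $\a$ is abelian and commutes with $\h \oplus \m = \g$, so the last three terms vanish and the statement collapses to the identity
\[
[x, y]_{\g(\k)} = [x, y]_\g - a([x, y]_\g),
\]
which I would verify by cases on whether $x, y$ lie in $\h$ or $\m$. If $x, y \in \h$ the Nomizu bracket on the isotropy piece simply recovers $[\cdot,\cdot]_\h = [\cdot,\cdot]_\g$; if $x \in \h$ and $y \in \m$ it gives $\ad(x)y = [x,y]_\g$. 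In both of these cases $a([x,y]_\g) = 0$: decomposing $b_i = b_{i,\z} + b_{i,\p}$ with $b_{i,\z} \in \z$ central in $\h$ and $b_{i,\p} \in \p$ annihilated by $\h$, the $\ad(\g)$-invariance of $\overline{g}$ kills $\overline{g}([x,y]_\g, b_i)$ termwise.

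The substantive case is $x, y \in \m$. Here the Nomizu bracket yields $[x, y]_{\g(\k)} = -R(x, y) - T(x, y)$, and substituting the extension formulas \eqref{eq: T} and \eqref{eq: R}, all summands involving $T_\n$ or $\ad(k_i)$ drop out because both arguments sit in $\m$, leaving
\[
[x, y]_{\g(\k)} = -R_0(x, y) - T_0(x, y) - \sum_{i=1}^{l} \varphi(k_i)(x, y)\,(n_i + k_i).
\]
Since $[x, y]_\g = -R_0(x, y) - T_0(x, y)$ by Nomizu on the base, the identity reduces, coefficient by coefficient, to
\[
\varphi(k_i)(x, y) = \overline{g}(b_i, [x, y]_\g).
\]
I expect this to be the only nontrivial line of computation, but it is immediate: $\varphi(k_i) = \ad(b_i)|_\m$ under the identification $\k \cong \b$, so $\varphi(k_i)(x, y) = g_0([b_i, x], y) = \overline{g}([b_i, x], y) = \overline{g}(b_i, [x, y]_\g)$ by $\ad(\g)$-invariance of $\overline{g}$. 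The real content of the lemma is therefore that the extension $(T, R)$ was rigged precisely so that the $(\k \oplus \n)$-contribution to brackets in $\g(\k)$ realises the translation $a$ on $[x,y]_\g$, and verifying this rests on a single application of invariance.
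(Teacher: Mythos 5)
Your proof is correct and follows essentially the same route as the paper: dispose of the $a$-terms using the fact that $\a$ is an abelian ideal commuting with $f(\g)$, check the $\h$-brackets are preserved because $a$ vanishes there, and reduce the $\m$-bracket case to the single identity $\varphi(k_i)(m_1,m_2)=\overline{g}([m_1,m_2]_\g,b_i)$ coming from $\ad(\g)$-invariance of $\overline{g}$. The only (minor and welcome) difference is that you treat the case $x,y\in\h$ explicitly and justify $a([x,y]_\g)=0$ via the decomposition $b_i=b_{i,\z}+b_{i,\p}$, whereas the paper covers the $\h$-bracket in one line with the remark that $[h,x]_\g\in\p^\perp\subset\m$, which strictly speaking only addresses $x\in\m$; your version fills that small gap.
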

\begin{proof}
In the following we will denote the Lie bracket on $\g(\k)$ by $[-,-]$ and the Lie bracket on $\g$ by $[-,-]_{\g}$. Let $h\in \h\subset\g$ and $x\in \g$. Using that $a(x)\in \mf a$ for all $x\in \g$ we get by Lemma \ref{lem:ideals in g(k)} that
\[
[f(h),f(x)] = [h-a(h),x-a(x)] = [h,x] = [h,x]_\g = f([h,x]_\g),
\]
where the last equality follows because $[h,x]_\g\in \mf p^\perp\subset \m$. It remains to check that for all $m_1,m_2\in \m$ the following holds $f([m_1,m_2]_{\g}) = [f(m_1),f(m_2)]$. We compute the right-hand-side of this equation:
\begin{align*}
[f(m_1),f(m_2)]&=[m_1 - a(m_1),m_2 - a(m_2)]  = [m_1,m_2]\\
               &= -T(m_1, m_2) - R(m_1, m_2) \\
 &= -T_0(m_1, m_2)-R_0(m_1, m_2) - \sum_{i=1}^{l} \varphi(k_i) (m_1, m_2)(n_i+k_i)\\
  	      &= [m_1,m_2]_{\g} - \sum_{i=1}^{l} \overline{g}([b_i,m_1]_{\g},m_2)(n_i+k_i)\\
  	      &=[m_1,m_2]_{\g} - \sum_{i=1}^{l} \overline{g}([m_1,m_2]_{\g},b_i)(n_i+k_i)\\
  	      &= f([m_1,m_2]_{\g}).
\end{align*}
\end{proof}

Note that the ideal $\mf l = \h\oplus \mf a \oplus \n$ from Lemma \ref{lem:ideals in g(k)} \emph{iv)} is equal to the direct sum of Lie algebras $\mf l:=f(\g) \oplus \mf a$. Also the projection of
\[
f(\g) \oplus \a_\p\subset \g(\k)
\]
along $\h\oplus \k$ onto $\n\oplus \m$ is surjective. We have an injective Lie algebra homomorphism
\[
\phi:=f\oplus i:\g \oplus \a_\p \to \g(\k),
\]
where $i:\a_\p\to \g(\k)$ is the inclusion. Let $\Phi:G\times \R^{l_\p}\to G(\k)$ be the induced Lie group homomorphism on the simply connected Lie group $G\times \R^{l_\p}$, where $l_\p = \dim(\a_\p)$. Suppose that $H(\k)\subset G(\k)$ is closed, i.e. $(T,R)$ is regular. Then $G\times \R^{l_\p}$ acts transitively on $G(\k)/H(\k)$ through the map $\Phi$.   
 
Let $\h_0:= \ker(a|_\h)$. This is an ideal in $\h$, because for  $h_0\in \h_0$ and $h\in \h$ 
\[
a([h_0,h])=\sum_{i=1}^{l} \overline{g}([h_0,h],b_i) =\sum_{i=1}^{l} \overline{g}(h_0,[h,b_i])=0.
\]
Let $\h_1\subset \h$ a complementary ideal, this exists because $\h$ is compact. Let $b_1^{\z},\dots ,b_{l_\z}^\z$ be an orthonormal basis of $\b_\z$ with respect to $B$. Let $h_1,\dots ,h_{l_\z}$ be elements of $\h_1$ such that $\overline{g}(h_i,b_j^{\z})=\delta_{ij}$, these exist because $\overline{g}_{\h\times \h}$ is non-degenerate. The isotropy algebra for the action of $G\times \R^{l_\p}$ is given by
\[
\phi^{-1}(\h\oplus \k) =  \h_0.
\]
Hence if the infinitesimal model $(T,R)$ is regular then $H_0$, the connected subgroup of $G$ with Lie algebra $\h_0$, has to be a closed subgroup. Conversely if $H_0$ is closed then the infinitesimal model $(T,R)$ is regular by \cite{Tricerri1992}. The homogeneous space $G(\k)/H(\k)$ is  isomorphic to
\begin{equation} 
G/H_0 \times \R^{l_\p}.\label{eq:total space isomorphic to}
\end{equation}
We will now describe the naturally reductive structure directly on the reductive decomposition associated to \eqref{eq:total space isomorphic to}:
\[
\h_0 \oplus \h_1 \oplus \m \oplus \a_\p\quad \mbox{and}\quad \h_0 \mbox{  is the isotropy algebra} .
\]
We have an orthonormal basis $n_1^\z,\dots,n_{l_\z}^\z$ of $\n_\z$ and $n_1^\p,\dots ,n_{l_\p}^\p$ of $\n_\p$. Let $m_1,\dots,m_n$ be an orthonormal basis of $\m$ with respect to $g_0$. We know the formula for the torsion and curvature in this basis. Hence all we have to do is to find a basis $f_1^\z,\dots,f_{l_\z}^\z,f_1^\p,\dots,f_{l_\p}^\p, e_1,\dots , e_n$ of 
\[
\h_1 \oplus \m \oplus \a_\p,
\]
such that for the fundamental vector fields the following holds: 
\begin{align}
\overline{f_i^\z}(o) = \overline{n_i^\z}(o),\quad \overline{f_i^\p}(o) = \overline{n_i^\p}(o)\quad \mbox{and}\quad \overline{e_j}(o)=\overline{m_j}(o), \label{eq:new basis at origin}
\end{align}
where $o$ is the chosen origin. Remember that $G\times \R^{l_\p}$ acts on $G(\k)/H(\k)$ through the map $\Phi$. Such a basis will give an orthonormal basis of the tangent space at the origin and thus describe the left invariant metric. Also in this basis the formula for the torsion and curvature of the naturally reductive connection are as in Definition \ref{def:k-extension inf model}.

For $i=1,\dots ,l_\p$ we set
\[
f_i^\p := n_i^\p+k_i^\p \in \a_\p.
\]
For $i=1,\dots ,l_\z$ we set 
\[
f_i^\z :=  - h_i.
\]
For $i=1,\dots , n$ we set
\[
e_i := m_i + a(m_i),
\]
where $a(m_i)\in \a_\p$. This basis satisfies Equation \eqref{eq:new basis at origin}. To illustrate this we consider $\overline{f_i^\z}(o)$:
\[
\overline{f_i^\z}(o)=\overline{\phi(-h_i)}(o) = \overline{-h_i+n_i^\z + k_i^\z}(o) = \overline{n_i^\z}(o).
\]
Note that the two factors of $G/H_0\times \R^{l_\p}$ are in general not orthogonal with respect to the naturally reductive metric. We will now give an example.

\begin{ex}
In this example we will construct a 1-parameter family of naturally reductive structures on $SU(2)\times SU(2)\times \R^3$. Let $x_1,x_2,x_3$ be a basis of $\su(2)$ with
\[
[x_1,x_2] = -x_3,\quad [x_2,x_3]=-x_1,\quad [x_3,x_1]=-x_2.
\]
Let $g_{\su(2)}$ be a multiple of the killing form of $\su(2)$ which is positive definite and such that $x_1,x_2,x_3$ is an orthonormal basis with respect to $g_{\su(2)}$.
As base space consider the product naturally reductive space $SU(2)\times SU(2)$ with a flat naturally reductive structure. The reductive decomposition is:
\[
\g = \m =  \su(2) \oplus \su(2),
\]
with a metric $g_0 = (g_{\su(2)} \oplus \alpha g_{\su(2)})$ for some $\alpha> 0$. The Lie algebra $\s$ is in this case given by all derivations of $\g$, so $\s = \g$. As subalgebra $\k\cong \b\subset \s $ we take $\mf b := \{(m,m):m\in \su(2) \}$. We pick the following orthonormal basis with respect to $g_0$:
\[
m_i := (1+\alpha)^{-1/2}(x_i,x_i),\quad m_{i+3} :=(1+\alpha^{-1})^{-1/2}(x_i,-\frac{1}{\alpha}x_i)
\]
for $i=1,2,3$. The curvature is zero and the torsion is given by 
\[
T_0 = \frac{1}{\sqrt{1+\alpha}}(m_{123}+m_{156}+m_{264}+m_{345}) - cm_{456},
\]
where $c = (1-\alpha)\sqrt{1+\alpha^{-1}}/(1+\alpha)$. Now $m_1,m_2,m_3$ is a basis of $\mf b$. The $\ad(\b)$-invariant metrics on $\b$ are all of the form $B= \frac{1}{\lambda^2}g_0|_{\b\times \b}$ with $\lambda\in \R\backslash \{0\}$. An orthonormal basis of $\b$ with respect to $B$ is given by $b_i:=\lambda m_i$ for $i=1,2,3$. We have that
\begin{align*}
\varphi(k_1)&= \ad(b_1)=\frac{-\lambda}{\sqrt{1+\alpha}} (m_{23}+m_{56}),\\
\varphi(k_2)&= \ad(b_2)=\frac{-\lambda}{\sqrt{1+\alpha}} (m_{31}+m_{64}),\\
\varphi(k_3)&= \ad(b_3)=\frac{-\lambda}{\sqrt{1+\alpha}} (m_{12}+m_{45}).
\end{align*}
The Lie algebra $\g(\k)=\k\oplus \n \oplus \m$ is by Lemma \ref{lem:ideals in g(k)} isomorphic to $\g(\k) \cong \k \ltimes (\m \oplus \a )$. Lemma \ref{lem:Lie algebra hom f} gives a Lie algebra homomorphism $f:\g \to \m\oplus \a$. This gives us that $\g(\k)\cong \k\ltimes (f(\g)\oplus \a)$. The discussion above tells us that $(T,R)$ is always regular and that the connected Lie subgroup of  $f(\g)\oplus \a$ acts transitively on our space. Hence the naturally reductive space $G(\k)/H(\k)$ is isomorphic to the Lie group $SU(2)\times SU(2)\times \R^3$, where the Lie algebra of $\R^3$ is given by 
\[
\a = \mbox{span}\{f_1:=n_1+k_1,f_2:=n_2+k_2,f_3 :=n_3+k_3\}\subset \g(\k).
\]
We have
\[
f(m_i) = m_i - \sum_{j=1}^3\overline{g}(m_i,b_j)(n_j+k_j) = m_i -\lambda f_i, \quad f(m_{i+3}) = m_{i+3}.
\]
for $i=1,2,3$. Let 
\[
e_i:= m_i + \lambda f_i, \quad e_{i+3} :=m_{i+3},
\]
for $i=1,2,3$. Then $e_1,\dots ,e_6,f_1,f_2,f_3$ spans $\su(2)\oplus \su(2) \oplus \a$ and is an orthonormal basis for the naturally reductive metric. The torsion is given by
\[
T= T_0 +\frac{\lambda}{\sqrt{1+\alpha}} (f_1\wedge(e_{23}+e_{56})+f_2\wedge(e_{31}+e_{64})+f_3\wedge(e_{12}+e_{45})- 2f_{123})
\]
with
\[
T_0 = \frac{1}{\sqrt{1+\alpha}}(e_{123}+e_{156}+e_{264}+e_{345}) - ce_{456},
\]
where $c = (1-\alpha)\sqrt{1+\alpha^{-1}}/(1+\alpha)$. The curvature is given by
\[
R=\frac{\lambda^2}{1+\alpha} ((e_{23}+e_{56}+f_{23})^{\odot 2}+(e_{31}+e_{64}+f_{31})^{\odot 2}+(e_{12}+e_{45}+f_{12})^{\odot 2}).
\]
It is important to note that even though the homogeneous space is a product the constructed naturally reductive structures can not be written as products.
\end{ex}

\subsection{Base space $\R^n$} \label{subsec:Base space Rn}
As base space we take the symmetric Euclidean space $\R^n$. The spaces we obtain from our construction are the 2-step nilpotent naturally reductive spaces. These were first described in \cite{Gordon1985}. As naturally reductive decomposition of the base space we have
\[
\g = \h \oplus \m = \R^n,
\]
where $\h=\{0\}$ and $\g=\m=\R^n$ is an Abelian Lie algebra, thus $(T_0,R_0) = (0,0)$. The algebra $\s$ is given by
\[
\s = \so(n).
\]
Let $\k\subset \s$ be a subalgebra. The torsion and curvature are given by
\[
T=\sum_{i=1}^{l}\varphi(k_i)\wedge n_i+2T_{\n}\quad \mbox{and}\quad
R=\sum_{i=1}^{l} \psi(k_i)\odot \psi(k_i).
\]
The Lie algebra 
\[
\g(\k) = \R^n(\k) = \k \oplus \n \oplus \R^n
\]
has by Lemma \ref{lem:ideals in g(k)} the following ideal
\[
\mf l := \mf a \oplus \R^n.
\]
Since $T_0=0$ we have for $m_1,~m_2\in \R^n$ that $[m_1,m_2]\in \mf a$ and by Lemma \ref{lem:ideals in g(k)} $\mf a$ commutes with $\R^n$. Hence $\mf l$ is a 2-step nilpotent Lie algebra. This gives us a naturally reductive structure on the 2-step nilpotent Lie group $L$, where $L$ is the simply connected Lie group of $\mf l$. In particular the infinitesimal model $(T,R) $ is always regular in this case.
In this case it is much easier to describe an orthonormal basis of the tangent space at the origin. Let $n_1,\dots ,n_l$ be an orthonormal basis of $\n$ and $m_1,\dots,m_n$ is an orthonormal basis of $\R^n$. We have that $f_i=n_i+k_i\in \mf a$ for $i=1,\dots,l$ and $m_1,\dots,m_n\in \R^n$ form basis of $\mf l$ such that the corresponding fundamental vector fields span an orthonormal basis of the tangent space of the origin. In this basis the formula for the torsion and curvature are just given by the formulas above.
To illustrate how this works we give a concrete example.

\begin{ex}\label{ex:quaternionic heisenberg}
As base space start with $\R^4$ and let $\k = \su(2)\subset \so(4)$ be the subalgebra corresponding to the standard representation of $\su(2)$ on $\C^2\cong \R^4$. The $\ad(\k)$-invariant metric $B$ on $\k$ has to be a negative multiple of the Killing form. In a natural basis we get
\begin{align*}
\varphi(k_1) & = \lambda(e_{13}+e_{24}),\\
\varphi(k_2) & = \lambda(-e_{12}+e_{34}),\\
\varphi(k_3) & = \lambda(-e_{14}+e_{23}),
\end{align*}
where $k_1,k_2,k_3$ is an orthonormal basis with with respect to $B$. The formula for the torsion is
\[
T = \lambda(e_{13}+e_{24})\wedge f_1 + \lambda(-e_{12}+e_{34})\wedge f_2 + \lambda(-e_{14}+e_{23})\wedge f_3 + 4 \lambda f_{123}.
\]
and the curvature is
\[
R = (2\lambda f_{23}+\lambda(e_{13}+e_{24}))^{\odot 2} + (2\lambda f_{31}+\lambda(-e_{12}+e_{34}))^{\odot 2} + (2\lambda f_{12} + \lambda(-e_{14}+e_{23}))^{\odot 2},
\]
The underlying homogeneous space is the 2-step Nilpotent Lie group known as the $7$-dimensional quaternionic Heisenberg group. We will denote it by $QH^7$.
\end{ex}

For a Lie subalgebra $\k\subset \so(n)$ we will denote the 2-step nilpotent Lie algebra $\mf l$  obtained above by $\mf{nil}(\k)$ and the corresponding simply connected Lie group by $Nil(\k)$. 

\subsection{Base space of mixed type \label{subs:mixed base}}
In this subsection the base space is a product of the base spaces considered in the last two subsections:
\[
G/H\times \R^k.
\]
Just as before we suppose that $\g$ is semisimple and that $G/H$ is naturally reductive with respect to the canonical connection of the reductive decomposition $\g = \h \oplus \m$ and that $\im(R)=\ad(\h)$. Let $\overline{g}$ be the $\ad(\g)$-invariant metric on $\g$ from Subsection \ref{subsec:g semisimple}. We want to describe the naturally reductive spaces which are constructed from derivations of $\g' :=\g\oplus _{L.a.} \R^k$, where $\oplus_{L.a.}$ denotes a direct sum of Lie algebras. 
Any such derivation preserves the direct sum. We have that
\[
\mf s(\g') = \mf s(\g) \oplus \so(k).
\]
Let $\k\subset \mf s(\g')$ be a subalgebra with an $\ad(\k)$-invariant metric $B$. Let $\varphi_1:\k\to \so(\m)$ and $\varphi_2:\k \to \so(\R^k)$ be the Lie algebra representations defining $\k$. Let $\k_1:= \ker(\varphi_2)$, $\k_3:= \ker(\varphi_1)$ and let $\k_2$ be the orthogonal complement of $\k_1\oplus \k_3$ in $\k$ with respect to $B$. The projection $\pi:s(\g')\to \s(\g)$ restricted to $\k_1 \oplus \k_2$ is injective. We use $\pi$ to identify $\k_1\oplus \k_2$ with a subalgebra $\b_1\oplus \b_2$ of $\s(\g) = \z \oplus \p$, where $\pi(\k_i)=\b_i$ for $i=1,2$. As in Subsection \ref{subsec:g semisimple} we let $\b_{1,\p} := \b_1 \cap \p$ and $\b_{1,\z} := \b_{1,\p}^\perp$, where the orthogonal complement is taken in $\b_1$ with respect to $B$. In total we now have the following decomposition into ideals
\[
\k = \k_{1,\z}\oplus \k_{1,\p} \oplus \k_2 \oplus \k_3,
\]
with $\k_1 = \k_{1,\z}\oplus \k_{1,\p}$. We let the following be orthonormal bases with respect to $B$:
\[
\mbox{Span}\{b_1^{1,\z},\dots,b_{l_{1,\z}}^{1,\z}\}= \b_{1,\z},\quad
\mbox{Span}\{b_1^{1,\p},\dots,b_{l_{1,\p}}^{1,\p}\}=\b_{1,\p},\quad  \mbox{Span}\{b_1^2,\dots,b_{l_{2}}^{2}\}= \b_{2}.
\]
As before let
\begin{align*}
a_1 &: \g \to \g'(\k);\quad a(x) := \sum_{i=1}^{l_1} \overline{g}(x,b^1_i) (n^1_i+k^1_i),\\
a_2 &: \g \to \g'(\k);\quad a(x) := \sum_{i=1}^{l_2} \overline{g}(x,b^2_i) (n^2_i+k^2_i),
\end{align*}
where $b^j_1,\dots,b^j_{l_j}$ is an orthonormal basis of $\b_j$ with respect to $B$ and $n^j_i,k^j_i,$ are the corresponding bases of $\n^j$ and $\k^j$, respectively. Lastly define 
\[
a:=a_1+a_2:\g \to \g'(\k).
\]
We get analogous to Lemma \ref{lem:Lie algebra hom f}  that

\begin{lem}
The linear map $f:\g \to \g'(\k)$ defined by 
\[
f(x) = x - a(x)
\]
is an injective Lie algebra homomorphism.
\end{lem}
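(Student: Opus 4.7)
The plan is to follow the proof of Lemma \ref{lem:Lie algebra hom f} essentially verbatim, accommodating only the refinement $a = a_1 + a_2$ and the presence of the third ideal $\k_3 \subset \k$ that acts trivially on $\m$. I would first dispose of injectivity: for $x \in \g$ the correction $a(x)$ lies in $\n^1 \oplus \k^1 \oplus \n^2 \oplus \k^2 \subset \k \oplus \n$, which is transverse to the copy of $\g$ sitting inside $\g'(\k)$, so $f(x) = 0$ forces $x = 0$.

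For bracket compatibility I would split into the two cases dictated by $\g = \h \oplus \m$. For $h \in \h$ and $x \in \g$, the elements $a(h)$ and $a(x)$ both lie in the abelian ideal $\mf a$, which by Lemma \ref{lem:ideals in g(k)} commutes with $\h \oplus \m$; hence $[f(h), f(x)] = [h,x]_{\g'(\k)} = [h,x]_\g$. To match this with $f([h,x]_\g) = [h,x]_\g - a([h,x]_\g)$, I need to verify $a([h,x]_\g) = 0$. By $\ad$-invariance of $\overline{g}$ this boils down to $[h, b^j_i]_\g = 0$ for the basis elements $b^j_i$ of $\b_1$ and $\b_2$; and this holds because $\b_1 \oplus \b_2 \subset \s(\g) = \z \oplus \p$, where $\z$ lies in the center of $\h$ and $\p$ is by definition annihilated by $\ad(\h)$.

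The case $m_1, m_2 \in \m$ contains the only substantive new point: one must confirm that the $\k_3$ summands in the formula for $T$ do not contribute on $\m \times \m$. This is immediate since $\varphi_1|_{\k_3} = 0$, so $\varphi(k)(m_1, m_2) = g(\varphi_1(k)(m_1), m_2) = 0$ for $k \in \k_3$. Using that $a(m_1), a(m_2) \in \mf a$ commute with $\m$ and with one another, one obtains
\[
[f(m_1), f(m_2)] = [m_1, m_2]_{\g'(\k)} = [m_1, m_2]_\g - \sum_{j=1}^{2}\sum_{i} \varphi(k^j_i)(m_1, m_2)(n^j_i + k^j_i).
\]
Using $\varphi_1(k^j_i) = \ad_\g(b^j_i)|_\m$ together with $\ad$-invariance of $\overline{g}$ rewrites the final sum as $a_1([m_1, m_2]_\g) + a_2([m_1, m_2]_\g) = a([m_1, m_2]_\g)$, so $[f(m_1), f(m_2)] = f([m_1, m_2]_\g)$, completing the verification. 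The step I would double-check most carefully is the identification $\varphi_1 \circ \pi^{-1} = \ad_\g|_\m$ uniformly on both $\b_1$ and $\b_2$; once that is in hand, the rest is straight bookkeeping copied from Lemma \ref{lem:Lie algebra hom f}.
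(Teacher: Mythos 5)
Correct: the paper asserts this lemma only as ``analogous to Lemma~\ref{lem:Lie algebra hom f}'' without writing out the proof, and your computation supplies precisely the intended argument, with the two necessary adjustments (replacing $a$ by $a_1+a_2$ and observing that the $\k_3$-terms contribute nothing on $\m\times\m$ since $\varphi_1|_{\k_3}=0$). The identification you flag, $\varphi_1(k^j_i)=\ad_\g(b^j_i)|_\m$ for $b^j_i\in\b_1\oplus\b_2$, does hold uniformly because $\b_1\oplus\b_2\subset\s(\g)=\z\oplus\p$, and for $\g$ semisimple every element of $\s(\g)$ is the inner derivation $\ad_\g$ of the corresponding element of $\z\oplus\p\subset\g$, restricted to $\m$.
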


The direct sum $\k = \k_{1,\z}\oplus \k_{1,\p}\oplus \k_2 \oplus \k_3$ gives us also a corresponding direct sum of the diagonal $\a = \a_{1,\z} \oplus \a_{1,\p}\oplus \a_2 \oplus \a _3 \subset \k\oplus \n$.

\begin{lem}\label{lem:transitive action}
The algebras $\mf{nil}(\k_2\oplus \k_3)=\mf a_2\oplus \mf a_3 \oplus \R^k$ and $\a_{1,\p}$ are subalgebras of $\g'(\k)$, they commute with each other and with $f(\g)$. Furthermore they both have zero intersection with $f(\g)$, i.e.:
\[
f(\g)\oplus_{L.a.} \mf{nil}(\k_2\oplus \k_3) \oplus_{L.a.} \a_{1,\p} \subset \g'(\k).
\]
Moreover the projection of $f(\g)\oplus \mf{nil}(\k_2\oplus \k_3) \oplus \a_{1,\p}$ along $\h\oplus \k$ onto $\n\oplus \m\oplus \R^k\subset \g'(\k)$ is surjective.
\end{lem}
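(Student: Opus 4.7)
The plan is to verify the four assertions of the lemma by direct computation using the Nomizu bracket \eqref{eq:Nomuzi Lie bracket} and the explicit formulas from Definition \ref{def:k-extension inf model}, exploiting that the base space $\g' = \g\oplus_{L.a.}\R^k$ is a direct product of Lie algebras, so that $T_0$, $R_0$ and $\varphi = \varphi_1\oplus \varphi_2$ are block-diagonal with respect to $\m\oplus \R^k$. Lemma \ref{lem:ideals in g(k)} supplies most of the needed ingredients.

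First I would check that $\mf a_2\oplus \mf a_3\oplus \R^k$ is a subalgebra. The subspace $\a_2\oplus \a_3\subset \a$ is abelian and commutes with $\R^k\subset \m\oplus \R^k$ by parts \emph{i)} and \emph{ii)} of Lemma \ref{lem:ideals in g(k)}, so the only nontrivial step is evaluating $[v_1,v_2]$ for $v_1,v_2\in \R^k$. A short computation with the Nomizu formula yields
\[
[v_1,v_2] = -\sum_{i:\,k_i\in \k_2\oplus \k_3} \varphi_2(k_i)(v_1,v_2)\,(k_i+n_i),
\]
because the terms indexed by $\k_1 = \ker(\varphi_2)$ drop out, and $T_0$ and $R_0$ vanish on pairs in $\R^k$ by the product structure of the base. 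Each $k_i+n_i$ lies in $\a_2\oplus \a_3$, and the resulting 2-step nilpotent Lie algebra is exactly the $\mf{nil}$ of Subsection \ref{subsec:Base space Rn} associated to $\k_2\oplus \k_3\subset \so(\R^k)$. That $\a_{1,\p}$ is a (trivially abelian) subalgebra is immediate, and its intersection with $\mf{nil}(\k_2\oplus \k_3)$ is trivial by the direct sum decomposition of $\a$.

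Commutativity of $\mf{nil}(\k_2\oplus \k_3)$ with $\a_{1,\p}$ follows from $\a$ being abelian (for the $\a$-parts) and from Lemma \ref{lem:ideals in g(k)} \emph{i)} (for $[\a_{1,\p},\R^k]=0$). For commutativity with $f(\g)$, the identity $[f(x),y] = [x,y] - [a(x),y] = [x,y]$ reduces matters to computing $[x,y]$ in $\g'(\k)$ for $x\in \g$. When $y\in \a$ this is Lemma \ref{lem:ideals in g(k)} \emph{i)} (using also that $\h$ commutes with $\a$, as recorded in its proof). When $y = v\in \R^k$, the bracket $[x,v] = -R(x,v) - T(x,v)$ vanishes because the same block-diagonality makes $R_0$, $T_0$ and every $\varphi(k_i)$ vanish on mixed arguments from $\g$ and $\R^k$.

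Finally, for the direct sum structure: writing $f(h+m) = (h - a_{1,\z}(h)) + (m - a_{1,\p}(m) - a_2(m))$ and projecting onto the $\h\oplus \m$-summand of $\g'(\k)$ forces $h=m=0$ whenever $f(h+m)\in \mf{nil}(\k_2\oplus \k_3)\oplus \a_{1,\p}$, so $f(\g)\cap (\mf{nil}(\k_2\oplus \k_3) \oplus \a_{1,\p}) = 0$. Surjectivity of the projection onto $\n\oplus \m\oplus \R^k$ follows by hitting each summand: $f(\m)$ covers $\m$, the diagonals $\a_{1,\p}, \a_2, \a_3$ cover $\n_{1,\p}, \n_2, \n_3$, $\R^k$ is present directly, and $\n_{1,\z}$ is covered by $f(\h)$ using non-degeneracy of $\overline{g}$ on $\h$ to span the coefficients. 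I expect the main obstacle to be the bracket computation $[v_1,v_2]$ in the first step, which requires carefully unpacking $T$ and $R$ on mixed arguments; everything else is essentially bookkeeping with the direct sum decomposition of $\a$.
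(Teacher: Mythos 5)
The paper offers no written proof here---it simply says ``Straight forward check''---so your task was to supply that check, and what you wrote does it correctly. Your unpacking of the Nomizu bracket $[v_1,v_2]$ for $v_1,v_2\in\R^k$ is the crux: indeed $T_0$, $R_0$ and every $\varphi(k_i)$ are block-diagonal with respect to $\m\oplus\R^k$, the $\k_1$-terms die because $\k_1=\ker(\varphi_2)$, and what remains is exactly the 2-step nilpotent bracket of $\mf{nil}(\k_2\oplus\k_3)$ from Subsection \ref{subsec:Base space Rn} (that $\varphi_2|_{\k_2\oplus\k_3}$ is faithful and $B|_{\k_2\oplus\k_3}$ is $\ad$-invariant both follow from the ideal decomposition $\k=\k_1\oplus\k_2\oplus\k_3$). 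Invoking Lemma \ref{lem:ideals in g(k)} \emph{i)}, \emph{ii)} for $\g'(\k)$---noting that the ``$\m$'' of that lemma is here $\m\oplus\R^k$ and the ``$\h$'' is still $\h=\im(R_0)$---gives the commutation statements, and the zero-intersection and surjectivity arguments via the block decomposition of $\g'(\k)$ are sound.

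One small inaccuracy: your displayed decomposition $f(h+m)=(h-a_{1,\z}(h))+(m-a_{1,\p}(m)-a_2(m))$ omits the term $a_2(h)$, which need not vanish since $\b_2\subset\z\oplus\p$ can have a nonzero component in $\z\subset\h$ (the paper's own formula $\h'=\{h+a_2(h):h\in\ker(a_1|_\h)\}$ shows this matters). This does not affect your conclusion, because the argument only uses that the $\h\oplus\m$-component of $f(h+m)$ is $h+m$, which remains true. Similarly, in the surjectivity step, $f(\h)$ has an $\n_2$-component from $a_2(h)$ in addition to the $\n_{1,\z}$-component you mention; since $\n_2$ is already covered by $\a_2$, the surjectivity onto $\n_{1,\z}$ still goes through via non-degeneracy of $\overline g|_{\h\times\h}$ and injectivity of $\b_{1,\z}\to\z$. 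Both are bookkeeping slips, not gaps.
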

\begin{proof}
Straight forward check.
\end{proof}
If $H'(\k)\subset G'(\k)$ is closed then the connected subgroup of  $f(\g)\oplus_{L.a} \mf{nil}(\k_2\oplus \k_3) \oplus_{L.a.} \a_{1,\p}$ acts transitively on the homogeneous space $G'(\k)/H'(\k)$ by the above lemma. As before let 
\[ 
\phi:= f\oplus i :\g\oplus \mf{nil}(\k_2\oplus \k_3)\oplus \a_{1,\p}\to \g'(\k),
\]
where $i:\mf{nil}(\k_2\oplus \k_3)\oplus \a_{1,\p}\to \g'(\k)$ is the inclusion. Let $G \times Nil(\k_2\oplus \k_3)\times A_{1,\p}$ be the simply connected Lie group with Lie algebra $\g\oplus \mf{nil}(\k_2\oplus \k_3) \oplus \a_{1,\p}$. Let 
\[
\Phi:G \times Nil(\k_2\oplus \k_3)\times A_{1,\p}\to G'(\k)
\]
be the Lie group homomorphism with its derivative at the origin given by $\phi$. The isotropy algebra of $G \times Nil(\k_2\oplus \k_3)\times A_{1,\p}$ is given by 
\[
\h':=\phi^{-1}(\h\oplus \k) = \{h+a_2(h):h\in \ker(a_1|_\h)\}.
\]
Let $\h_0:=\ker(a|_\h)\subset \h'$. Note that $\h_0$ is an ideal in $\h$ just as in the previous subsection. Hence $\h_0$ is also an ideal in $\h'$. Let $\h_2$ be a complementary ideal of $\h_0$ in $\h'$. Lastly let $\h_1$ be a complementary ideal of $\ker(a_1|_\h)$ in $\h$, this exists because $\h$ is compact and $\ker(a_1|_\h)$ is an ideal in $\h$ by the same argument as for $\h_0$.
Then $a_1|_{\h_1}:\h_1 \to \a_{1,\z}$ is a bijection. As before let $\{h_1,\dots ,h_{l_{1,z}}\}$ be a basis of $\h_1$ such that $\overline{g}(h_i,b^{1,\z}_j)  = \delta_{ij}$. 

Let $H_0$ be the connected subgroup of $G$ with Lie subalgebra $\h_0$ and let $H_2$ be the connected subgroup of $G\times Nil(\k_2\oplus \k_3)$ with Lie subalgebra $\h_2$. Since the projection of $\h_2$ to $\a_2$ is injective and $Nil(\k_2\oplus \k_3)$ is simply connected the subgroup $H_2\subset G\times Nil(\k_2\oplus \k_3)$ is always a closed subgroup and is isomorphic to $\R^q$, with $q=\dim(\h_2)$.

We get that
\begin{equation}
G'(\k)/H'(\k) \cong (G \times Nil(\k_2\oplus \k_3)\times A_{1,\p})/ (H_0\times H_2).\label{eq:mixed type hom space}
\end{equation}
The new infinitesimal model is regular precisely when $H_0$ is closed in $G$. Now we describe the naturally reductive structure on the following reductive decomposition associated to \eqref{eq:mixed type hom space}:
\[
\h_0\oplus \h_2 \oplus \h_1 \oplus \m \oplus \a_2 \oplus \a_3 \oplus \R^k\oplus \a_{1,\p}\subset \g'(\k),
\]
where $\h_0\oplus \h_2$ is the isotropy algebra. We have an orthonormal basis $n_1^{1,\z},\dots,n_{l_{1,\z}}^{1,z}$ of $\n_{1,z}$, and $n_1^{1,\p},\dots ,n_{l_{1,\p}}^{1,\p}$ of $\n_{1,\p}$, and $n_1^2,\dots ,n_{l_2}^2$ of $\n_2$, and $n_1^3,\dots ,n_{l_3}^3$ of $\n_3$. Let $m_1,\dots,m_n$ be an orthonormal basis of $\m$ and let $m_{n+1},\dots,m_{n+k}$ be an orthonormal basis of $\R^k$. We know the formula for the torsion and curvature in this basis. Hence all we have to do is to find a basis 
\[
f_1^{1,\z},\dots,f_{l_{1,\z}}^{1,\z},f_1^{1,\p},\dots,f_{l_{1,\p}}^{1,\p},f_1^2,\dots, f_{l_2}^2,f_1^3,\dots, f_{l_3}^3, e_1,\dots,e_{n+k}
\]
of 
\[
\h_1 \oplus \m \oplus \a_2 \oplus \a_3 \oplus \R^k\oplus \a_{1,\p},
\]
such that for the fundamental vector fields the following holds: 
\begin{align}
\overline{f_j^{1,\z}}(o) = \overline{n_j^{1,\z}}(o),\quad \overline{f_j^{1,\p}}(o) = \overline{n_j^{1,\p}}(o),\quad \overline{f_j^2}(o) = \overline{n_j^2}(o), \quad\overline{f_j^3}(o) = \overline{n_j^3}(o),\quad \overline{e_j}(o) = \overline{m_j}(o), \label{eq:new basis at origin for mixed type}
\end{align}
where $o$ is the chosen origin. Such a basis will give an orthonormal basis of the tangent space at the origin and thus describe the left invariant metric. Also in this basis the formula for the torsion and curvature of the naturally reductive connection  are just \eqref{eq: T} and \eqref{eq: R}, respectively. 
For $i=1,\dots, l_{1,\z}$ we set
\[
f_i^{1,\z} := -h_i-a_2(h_i).
\]
For $i=1,\dots, l_2$ we set
\[
f^2_i:= n^2_i + k^2_i\in \a_2.
\]
For $i=1,\dots ,l_3$ we set
\[
f^3_i := n^3_i+k_i^3 \in \a_3.
\]
For $i=1,\dots, l_{1,\p}$ we set
\[
f_i^{1,\p} := n_i^{1,\p} + k_i^{1,\p}\in \a_{1,\p}.
\]
For $i=1,\dots , n$ we set
\[
e_i := m_i + a(m_i).
\]
For $i=n+1,\dots,n+k$ we set
\[
e_i := m_i \in \R^k.
\]
Just as in the case in Subsection \ref{subsec:g semisimple} this basis satisfies \eqref{eq:new basis at origin for mixed type}. Again as illustration let us consider the $\overline{f^{1,\z}_i}(o)$:
\[
\overline{f^{1,\z}_i}(o) = \overline{\phi(-h_i - a_2(h_i))}(o)=\overline{-h_i + a_1(h_i)}(o) = \overline{n_i^{1,\z}}(o).
\]

A simple example of one of these spaces appears in \cite{KowalskiVanhecke1985}. Here they describe the spaces $(SU(2)\times H^3)/\R$, where $H^3$ is a simply connected 3-dimensional Heisenberg group. In the following example we show how we can obtain these spaces from our construction.

\begin{ex}
As base space we take $SU(2)/SO(2)\times \R^2$. The first factor is the symmetric space $S^2$. The infinitesimal model is up to a rescaling given by $(0,R= - e_{12}\odot e_{12})$. This gives a reductive decomposition of $\su(2) = \h\oplus \m$, with $\h=\mbox{span}\{h=e_{12}\}$ and $\m = \mbox{span}\{e_1,e_2\}$. 
The Lie algebra $\s$ is given by $\s= \mbox{span}\{h\}\oplus \so(2) \cong \so(2)\oplus \so(2)$. We pick an element $k=(\lambda h, \mu h)\in \s$, with $\lambda\neq 0$, $\mu\neq 0$. Let $\k$ be the 1-dimensional Lie algebra spanned by $k$ and with a metric such that $k$ has norm 1. We have that
\[
\varphi(k) = \lambda e_{12} + \mu e_{34}.
\]
In this case $\k = \k_2$. By Lemma \ref{lem:transitive action} the Lie subgroup $SU(2)\times H^3$ of $G'(\k)$ acts transitively by isometries, where $H^3$ is the simply connected 3-dimensional Heisenberg group. Let $\h^3$ be the Lie algebra of $H^3$ and let $e_3,e_4,f_1$ be a basis with  as only non-vanishing Lie bracket
\[
[e_3,e_4] = -\mu f_1.
\]
The isotropy algebra is given by 
\[
\h_2 = \mbox{Span}\{h + \lambda f_1\}.
\]
As reductive complement of $\h_2$ in $\su(2)\oplus \h^3$ we have $\n\oplus \m = \mbox{span}\{f_1,e_1,e_2,e_3,e_4\}$. From the description above $\{f_1,e_1,e_2,e_3,e_4\}$ is an orthonormal basis and the torsion and curvature in this basis are given by
\[
T = f_1 \wedge( \lambda e_{12} + \mu e_{34} ),\quad \mbox{and}\quad R =-e_{12}\odot e_{12} + ( \lambda e_{12} + \mu e_{34} )\odot ( \lambda e_{12} + \mu e_{34} ).
\]
The isotropy group is the connected Lie subgroup with Lie subalgebra $\h_2$. It is isomorphic to $\R$ and it is closed for all $\lambda,\mu\in \R$. So we have a two-parameter family of naturally reductive structures on 
\[
(SU(2)\times H^3)/\R. 
\] 
\end{ex}

We give an example which is a bit more interesting and illustrates better how the newly constructed spaces can look like.

\begin{ex}\label{ex:su(3)/s1 times R4}
As base space we take the product of a particular Aloff-Wallach space $SU(3)/S^1$ with $\R^4$. The isotropy algebra of $SU(3)/S^1$ is spanned by
$\h:=\mbox{Lie}(S^1)=\mbox{span} \left\{ h:= 
\left(\begin{array}{c c c}
i & 0 & 0\\
0 & i & 0\\
0 & 0 & -2i
\end{array}\right)\right\}.$
We pick the following orthonormal basis for the orthogonal complement of $\h$ with respect to the Killing form:
\begin{align*}
m_1&:= 
\left(\begin{array}{c c c}
0 & -1 & 0\\
1 & 0 & 0\\
0 & 0 & 0
\end{array}\right), \quad m_2:= 
\left(\begin{array}{c c c}
i & 0 & 0\\
0 & -i & 0\\
0 & 0 & 0
\end{array}\right),\quad m_3:= 
\left(\begin{array}{c c c}
0 & i & 0\\
i & 0 & 0\\
0 & 0 & 0
\end{array}\right),\\
m_4&:= 
\left(\begin{array}{c c c}
0 & 0 & -1\\
0 & 0 & 0\\
1 & 0 & 0
\end{array}\right),\quad m_5:= 
\left(\begin{array}{c c c}
0 & 0 & i\\
0 & 0 & 0\\
i & 0 & 0
\end{array}\right),\quad m_6:= 
\left(\begin{array}{c c c}
0 & 0 & 0\\
0 & 0 & -1\\
0 & 1 & 0
\end{array}\right),\quad m_7:= 
\left(\begin{array}{c c c}
0 & 0 & 0\\
0 & 0 & i\\
0 & i & 0
\end{array}\right).
\end{align*}
The torsion is given by
\[
T_0 = -2 m_{123} - m_1\wedge (m_{46}+m_{57}) + m_2\wedge (m_{45}-m_{67} ) + m_3\wedge (m_{47}-m_{56}).
\]
The curvature is given by
\[
R_0 = -\ad(h)^{\odot 2} = -( -3m_{45} -3 m_{67})^{\odot 2}.
\]
The $S^1$-invariant vectors are spanned by $m_1,m_2$ and $m_3$. Lastly let $m_8,m_9,m_{10},m_{11}$ be an orthonormal basis of $\R^4$. The Lie algebra $\s$ is given by
\[
\mbox{span}\{h,m_1,m_2,m_3\} \oplus \so(4) \cong \mf{u}(2)\oplus \so(4).
\]
We define the subalgebra $\k\subset \s$ by
\begin{align*}
 \varphi(k_1)&:= \lambda(\ad(m_1)+ m_{8,10}+m_{9,11}),\\
 \varphi(k_2)&:= \lambda(\ad(m_2) - m_{8,9}+m_{10,11}),\\
 \varphi(k_3)&:= \lambda(\ad(m_3)- m_{8,11}+m_{9,10}),\\ 
 \varphi(k_4)&:= \mu_1 \ad(h) + \mu_2 (m_{8,9} + m_{10,11}),
\end{align*}
where $k_1,k_2,k_3,k_4$ is an orthonormal basis of $\k$. We consider the case that $\mu_2=0$. Then $\k_1 = \k_{1,\z} = \mbox{Span}\{k_4\}$ and $\k_2 = \mbox{Span}\{k_1,k_2,k_3\}$. From the discussion above we know that the new infinitesimal model $(T,R)$ constructed from $\k$ is always regular. The connected subgroup of $G'(\k)$ with Lie subalgebra $f(\g)\oplus \mf{nil}(\k_2)$ acts transitively on $G'(\k)/H'(\k)$ and the isotropy algebra is trivial. The simply connected Lie group with Lie algebra $\g\oplus \mf{nil}(\k_2)$ is $SU(3)\times QH^7$. We describe the naturally reductive structure directly on $SU(3)\times QH^7$. As basis for the Lie algebra of $QH^7$ we take
\[
m_8,m_9,m_{10},m_{11},f_1,f_2,f_3,
\]
where $m_i$ corresponds to $e_{i-7}$ from Example \ref{ex:quaternionic heisenberg}. An orthonormal basis at the origin is given by
\[
\{f_1,f_2,f_3,f_4 = - \frac{1}{3\mu_1} h,e_1 := m_1 + \lambda f_1,~e_2 := m_2 +\lambda f_2,~e_3: = m_3 +\lambda f_3,e_j=m_j\}
\]
for $j=4,\dots ,11$. The torsion in this basis is given by

\begin{align*}
T &= -2 e_{123} - e_1\wedge (e_{57}+e_{46}) + e_2\wedge (e_{45}-e_{67} ) + e_3\wedge (e_{47}-e_{56}) \\
 &+ \lambda f_1 \wedge (2e_{23}+e_{57}+e_{46}+e_{8,10}+e_{8,11}) + \lambda f_2 \wedge (2e_{31} - e_{45}+e_{67} +e_{89} -e_{10,11}) \\
  &+ \lambda f_3\wedge (2e_{12} -e_{47}+e_{56}+e_{8,11}+e_{10,11}) -  3\mu_1 f_4 \wedge (e_{45}+e_{67} ) + 4 \lambda f_{123}.
\end{align*}
The curvature is given by
\begin{align*}
R = &\lambda^2(2e_{23}+e_{57}+e_{46}+e_{8,10}+e_{8,11}+ 2f_{23})^{\odot 2} +  \lambda^2 (2e_{31} - e_{45}+e_{67} +e_{89} -e_{10,11}+ 2f_{31})^{\odot 2} +\\
&\lambda^2 (2e_{12} -e_{47}+e_{56}+e_{8,11}+e_{10,11} + 2f_{12})^{\odot 2} +
(3\mu_1)^2 (e_{34}+e_{67})^{\odot 2}.
\end{align*}
The base space of this example illustrates nicely that there are multiple different choices for the Lie algebra $\k$. We decided for simplicity to pick $\mu_2=0$. When $\mu_2\neq 0$ we simply obtain a new naturally reductive space. An other option is to pick $\mu_1=0$ and $\mu_2 =\mu$ and then $\varphi(k_5) = \mu(m_{8,10} - m_{9,11})$ and $\varphi(k_6)= \mu(m_{8,11} + m_{9,10})$. In this case $\k = \k_2 \oplus \k_3$, with $\k_2 = \Span\{ k_1,k_2,k_3\}\cong \su(2)$, $\k_3 = \Span\{k_4,k_5,k_6\}\cong \su(2)$. We readily see from or previous discussion that the space is isomorphic to
\[
SU(3)/S^1 \times Nil(\so(4)),
\]
where $Nil(\mf{so}(4))$ is a 10-dimensional 2-step Nilpotent Lie group described in Subsection \ref{subsec:Base space Rn}. In all cases the naturally reductive structure is not a product structure. 
\end{ex}

\bibliographystyle{abbrv}
\bibliography{../NaturalReductiveDim7}

\end{document}